\theoremstyle{plain}
\newtheorem{theorem}{Theorem}[section]
\newtheorem{lemma}[theorem]{Lemma}
\newtheorem{proposition}[theorem]{Proposition}
\newtheorem{corollary}[theorem]{Corollary}
\newtheorem{fact}[theorem]{Fact}
\theoremstyle{definition}
\newtheorem{definition}[theorem]{Definition}
\newtheorem{remark}[theorem]{Remark}
\newtheorem{example}[theorem]{Example}
\newtheorem*{problem*}{Problem}
\newcommand{\dist}{\mathrm{dist}}
\newcommand{\C}{\mathrm{c}}
\newcommand{\SX}{S_{X^*}}
\newcommand{\N}{\mathbb{N}}
\newcommand{\m}{\mathrm{m}}
\newcommand{\inte}{\mathrm{int}\:}
\newcommand{\cconv}{\overline{\mathrm{conv}}\:}
\newcommand{\ccone}{\overline{\mathrm{cone}}\:}
\newcommand{\conv}{{\mathrm{conv}}\:}
\renewcommand{\span}{{\mathrm{span}}\:}
\renewcommand{\epsilon}{\varepsilon}
\renewcommand{\phi}{\varphi}
\begin{document}

\title{Stability of a convex
feasibility problem}

\author{Carlo Alberto De Bernardi
}
\address{Dipartimento di Discipline Matematiche, Finanza Matematica ed Econometria, Universit\`{a} Cattolica del Sacro Cuore, Via Necchi 9, 20123 Milano, Italy}

\email{carloalberto.debernardi@unicatt.it}

\author{Enrico Miglierina
}
\address{Dipartimento di Discipline Matematiche, Finanza Matematica ed Econometria, Universit\`{a} Cattolica del Sacro Cuore, Via Necchi 9, 20123 Milano, Italy}

\email{enrico.miglierina@unicatt.it}

\author{Elena Molho
}
\address{Dipartimento di Scienze economiche e Aziendali, Universit\`{a} degli Studi di Pavia, Via San Felice 5, 27100 Pavia, Italy}

\email{elena.molho@unipv.it}

 \subjclass[2000]{Primary: 90C25; secondary: 90C31, 49J53}

 \keywords{convex feasibility problem, stability, set-convergence}

 \thanks{
}



\maketitle

\begin{abstract}
The 2-sets convex feasibility problem aims at finding a point in
the intersection of two closed convex sets $A$ and $B$ in a normed
space $X$. %
%
More generally, we can consider the problem of finding (if
possible) two points in $A$ and $B$, respectively, which minimize
the distance between the sets.

In the present paper, we study some stability properties for the
convex feasibility problem: we consider two sequences of sets,
each of them converging, with respect to a suitable notion of set
convergence, respectively, to $A$ and $B$. Under appropriate
assumptions on the original problem, we ensure that the solutions
of the perturbed problems converge to a solution of the original
problem. We consider both the finite-dimensional and the
infinite-dimensional case. Moreover, we provide several examples
that point out the role of our assumptions in the obtained
results.
\end{abstract}

\section{Introduction}
The convex feasibility problem is the classical problem of finding
a point in the intersection of a finite collection of closed and
convex sets (see \cite[Section~4.5]{BorweinZhu} for the main
results on this subject). Many concrete problems in applications
can be formulated as a convex feasibility problem. As typical
examples, we mention solution of convex inequalities, partial
differential equations, minimization of convex nonsmooth
functions, medical imaging, computerized tomography and image
reconstruction. For some details and other applications see, e.g.,
\cite{BauschkeBorwein} and the references therein. Moreover, it is
worth to mention the recent annotated bibliography \cite{Censor},
about projection methods, containing several references to the
convex feasibility problem and its applications.

Many efforts have been devoted to the study of algorithmic
procedures to solve convex feasibility problems, both from a
theoretical and from a computational point of view  (see, e.g.,
\cite{BauschkeBorwein,BorweinSimsTam,Hundal,BCOMB} and the
references therein).

Often in concrete applications data are affected by some
uncertainties. Hence stability  of  solutions with respect to data
perturbations is a desirable property, also in view of the
development  of a computational approach to solve the convex
feasibility problem. Our paper is devoted to investigate some
stability properties of the  $2$-sets convex feasibility problem
by using set convergence notions. We will also consider the case
of a pair of closed and convex sets with empty intersection: in
this case a solution of the problem is a pair of minimal distance
elements  of the two sets.

In this paper, we investigate a sequence of perturbed convex
feasibility problems whose data are obtained by considering two
sequences of closed and convex sets $\left\lbrace A_n
\right\rbrace$ and $\left\lbrace B_n \right\rbrace$ converging
respectively to the sets $A$ and $B$. If the intersection of $A_n$
and $B_n$ is empty, we consider, as a solution of the $n$-th
perturbed problem, the pair of elements $a_n\in A_n$  and $b_n\in
B_n$ such that the distance between $A_n$ and $B_n$ is $\left\Vert
a_{n}-b_{n}\right\Vert$.

Our aim is to find some conditions that guarantee the convergence
of the solutions of the perturbed convex feasibility problems to a
solution of the original convex feasibility problem.

We obtain some stability results both in the finite-dimensional
and in the infinite-dimensional framework, using the
Kuratowski-Painlev\'{e} convergence notion in the
finite-dimensional case and the Attouch-Wets convergence in the
infinite-dimensional setting. Moreover, we give some examples
showing that the assumptions that we use to guarantee the
stability features of a given convex feasibility problem cannot be
avoided, both in the finite and in the infinite-dimensional case.

The paper is organized as follows. Section~\ref{Notations and
preliminaries} is devoted to definitions and preliminary results,
mainly concerning the various notions of set-convergence.
Section~\ref{Finite-dimensional} presents  a stability result for
the convex feasibility problem when $A$ and $B$ are contained in a
finite-dimensional normed vector space and the sequences of closed
and convex sets $\left\lbrace A_n \right\rbrace$ and $\left\lbrace
B_n \right\rbrace$ converge in the Kuratowski-Painlev\'{e} sense
respectively to $A$ and $B$. Section~\ref{Infinite-dimensional} is
devoted to study the stability properties of a convex feasibility
problem in an infinite-dimensional setting. Here, we use the
Attouch-Wets convergence, that is stronger than the
Kuratowski-Painlev\'{e} convergence, even if they coincide in the
finite-dimensional setting. Moreover, it is worth to be noticed
that we obtain results concerning both weak and norm convergence
of the solutions of perturbed problems to a solution of the
original problem. In order to obtain the norm convergence of a
sequence of solutions of perturbed problems, we assume that $A$
has nonempty interior and it is locally uniformly rotund (LUR) at
a given solution $a$. Hence, we use a geometrical notion that
strengthens the convexity assumption used to prove the weak
convergence result. Finally, in Section~\ref{sectionesempi}, we
provide some rather involved examples in $\ell_2$ that point out
the role of our assumptions even in a Hilbert space framework.

\section{Notations and preliminaries}\label{Notations and preliminaries}

Throughout all this paper, $X$ denotes a real normed space with
the topological dual $X^*$. We
denote by $B_X$ and $S_X$ the closed unit ball and the unit sphere of $X$, respectively. 
For $x,y\in X$, $[x,y]$ denotes the closed segment in $X$ with
endpoints $x$ and $y$, and $(x,y)=[x,y]\setminus\{x,y\}$ is the
corresponding ``open'' segment. For a subset $K$ of $X$,
$\alpha>0$, and a functional $x^*\in \SX$ bounded on $K$, let
$$S(x^*,\alpha,K)=\{x\in
K;\, x^* x\geq\sup x^*(K)-\alpha\}$$ be the closed slice of $K$
given by $\alpha$ and $x^*$.

For a subset $A$ of $X$, we denote by $\inte A$, $\conv(A)$ and
$\cconv(A)$ the interior, the convex hull and the closed convex
hull of $A$, respectively. Moreover,
$$\overline{\mathrm{cone}}(A)=\cconv{\bigl([0,\infty)\cdot
A\bigr)}$$ is the closed convex cone generated by the set $A$. We
denote by $$\textstyle \mathrm{diam}(A)=\sup_{x,y\in A}\|x-y\|,$$
the (possibly infinite) diameter of $A$. For $x\in X$, let
$$\dist(x,A) =\inf_{a\in A} \|a-x\|.$$ Moreover, given $A,B$
nonempty subset of $X$,  we denote by $\dist(A,B)$ the usual
``distance'' between $A$ and $B$, that is,
$$ \dist(A,B)=\inf_{a\in A} \dist(a,B).$$

\subsection*{Convergence of sets}

By $\C(X)$ we denote the family of all nonempty closed subsets of
$X$.

Let  $\{A_n\}$ be a sequence in $\C(X)$ and let us consider the
following sets:

$$\textstyle \mathrm{Li}\ A_n=\{x\in X;\, x=\lim_n x_n, x_n\in A_n\}$$
and

$$\textstyle \mathrm{Ls}\ A_n=\{x=\lim_k x_k\in X;\, x_k\in A_{n_k}, \{n_k\}\, \hbox{is a subsequence of the integers} \}.$$

\begin{definition}
\label{Kuratowski} Let $\{A_n\}$ be a sequence in $\C(X)$ and
$A\in\C(X)$.
\begin{enumerate}
\item $\{A_n\}$ converges to $A$ for the lower
Kuratowski-Painlev\'{e} convergence if{f} $A\subset\mathrm{Li}
A_n$. \item $\{A_n\}$ converges to $A$ for the upper
Kuratowski-Painlev\'{e} convergence if{f} $A\supset\mathrm{Ls}
A_n$. \end{enumerate} Moreover, we say that $\{A_n\}$ converges to
$A$ for the Kuratowski-Painlev\'{e} convergence
($A_{n}\stackrel{K}{\rightarrow}A$) if{f} $\{A_n\}$ converges to
$A$ for the upper and the lower Kuratowski-Painlev\'{e}
convergence.
\end{definition}


Now, let us introduce the (extended) Hausdorff metric $h$ on
$\C(X)$. For $A,B\in\C(X)$, we define the excess of $A$ over $B$
as

$$e(A,B) = \sup_{a\in A} d(a,B).$$

\noindent Moreover, if $A\neq\emptyset$ and $B=\emptyset$ we put
$e(A,B)=\infty$,  if $A=\emptyset$ we put $e(A,B)=0$. We define

$$h(A,B)=\max \bigl\{ e(A,B),e(B,A) \bigr\}.$$

\begin{definition} A sequence $\{A_j\}$ in $\C(X)$ is said to
Hausdorff converge to $A\in\C(X)$ if $$\textstyle \lim_j h(A_j,A)
= 0.$$
\end{definition}


Finally, we introduce the so called Attouch-Wets convergence (see,
e.g., \cite[Definition~8.2.13]{LUCC}), which can be seen as a
localization of the Hausdorff convergence.  If $N\in\N$ and
$A,B\in\C(X)$, define
\begin{eqnarray*}
e_N(A,C) &=& e(A\cap N B_X, C)\in[0,\infty),\\
h_N(A,C) &=& \max\{e_N(A,C), e_N(C,A)\}.
\end{eqnarray*}

\begin{definition} A sequence $\{A_j\}$ in $\C(X)$ is said to
Attouch-Wets converge to $A\in\C(X)$ if, for each $N\in\N$,
$$\textstyle \lim_j h_N(A_j,A)= 0.$$
\end{definition}



We recall that in the finite-dimensional case the Attouch-Wets
convergence and the Kuratowski-Painlev\'{e} convergence coincide
(see, e.g., \cite[Section 8.2]{LUCC}).

In the sequel, we use the following easy-to-prove fact. For the
convenience of the reader we provide a proof.

\begin{fact}\label{distanzanulla} Let $A$ and $B$
two closed and convex subsets of a normed space $X$ Let $\{A_n\}$
and $\{B_n\}$ be two sequences of closed convex sets such that
$A_n\rightarrow A$ and $B_n\rightarrow B$ for the lower
Kuratowski-Painlev\'{e} convergence. Then $$\textstyle \limsup_n
\dist(A_n,B_n)\leq \dist(A,B).$$ In particular, if $A\cap
B\neq\emptyset$ we have $\lim_n \dist(A_n,B_n)=0$.
\end{fact}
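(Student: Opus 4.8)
The plan is to exploit the definition of the lower Kuratowski-Painlev\'{e} convergence, which asserts precisely that every point of $A$ (respectively $B$) is the limit of a sequence whose $n$-th term lies in $A_n$ (respectively $B_n$). First I would fix $\varepsilon>0$ and, using the definition of $\dist(A,B)$ as an infimum, choose $a\in A$ and $b\in B$ with $\|a-b\|<\dist(A,B)+\varepsilon$. Since $A\subset\mathrm{Li}\ A_n$ and $B\subset\mathrm{Li}\ B_n$, there exist sequences $a_n\in A_n$ and $b_n\in B_n$ with $a_n\to a$ and $b_n\to b$.

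The second step is the elementary bound $\dist(A_n,B_n)\leq\|a_n-b_n\|$, valid because $a_n\in A_n$ and $b_n\in B_n$. Passing to the $\limsup$ and using continuity of the norm, one has $\|a_n-b_n\|\to\|a-b\|$, so that $\limsup_n\dist(A_n,B_n)\leq\|a-b\|<\dist(A,B)+\varepsilon$. Letting $\varepsilon\downarrow 0$ then yields the desired inequality $\limsup_n\dist(A_n,B_n)\leq\dist(A,B)$.

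For the final assertion, if $A\cap B\neq\emptyset$ then choosing $a=b$ in the intersection gives $\dist(A,B)=0$; combined with the nonnegativity of each $\dist(A_n,B_n)$, this forces $\lim_n\dist(A_n,B_n)=0$.

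I do not expect a genuine obstacle here: the statement follows directly from unwinding the definitions. Note that neither the convexity nor the closedness of the sets, nor the \emph{upper} Kuratowski-Painlev\'{e} convergence, is needed—only the lower convergence enters the argument. The single point deserving a little care is that the approximating sequences for $a$ and for $b$ must be fixed \emph{before} passing to the limit; since both limits exist along the full sequence of indices, no subsequence or diagonal argument is required.
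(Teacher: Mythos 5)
Your proposal is correct and follows essentially the same route as the paper's proof: pick near-minimizers $a\in A$, $b\in B$ within $\varepsilon$ of $\dist(A,B)$, approximate them by sequences $a_n\in A_n$, $b_n\in B_n$ provided by the lower Kuratowski--Painlev\'e convergence, bound $\dist(A_n,B_n)\leq\|a_n-b_n\|$, and let $\varepsilon\downarrow 0$. The only cosmetic difference is that you invoke continuity of the norm to get $\|a_n-b_n\|\to\|a-b\|$, whereas the paper uses the eventual bounds $\|a_n-a\|\leq\varepsilon$, $\|b_n-b\|\leq\varepsilon$ directly; the substance is identical.
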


\begin{proof} Let $\epsilon>0$ and let $x\in A$ and $y\in B$ be
such that $\dist(A,B)\leq\|x-y\|+\epsilon$. Since $A_n\rightarrow
A$ and $B_n\rightarrow B$ for the lower Kuratowski-Painlev\'{e}
convergence, there exist two sequences $\{x_n\}$ and $\{y_n\}$
such that $x_n\to x$, $y_n\to y$ and, for each $n\in\N$, $x_n\in
A_n$, $y_n\in B_n$. In particular, it eventually holds
$\|x_n-x\|\leq \epsilon$ and $\|y_n-y\|\leq \epsilon$. Hence, the
following inequalities eventually hold:
$$\dist(A_n,B_n)\leq\|x_n-y_n\|\leq\dist(A,B)+3\epsilon.$$
By the arbitrariness of $\epsilon>0$, we have the thesis.
\end{proof}

\section{Convergence of minimal distance points of a pair of convex
sets: the finite-dimensional case}\label{Finite-dimensional}

In this section, we denote by $X$ a finite-dimensional normed
space.

\begin{definition} Let $A,B$ be nonempty
closed convex set in $X$. Let
$$\m(A,B)=\{a\in A;\, \dist(a,B)=\dist(A,B)\}.$$
\end{definition}

It is easy to see that $\m(A,B)$ is a closed convex set.

\begin{definition}
Let $C$ be a non empty closed convex set and $x\in C$. Let us
define
$$D(x)=\{d\in X;\, x+td\in C,\ \forall t>0\}.$$
\end{definition}

\begin{remark} By \cite[Proposition~2.1.5]{AT2003}, if $x,y\in C$ then
$D(x)=D(y)$. That is, the set $D(x)$ does not depend on $x\in C$.
We denote this set, called the {\em asymptotic cone} of $C$, by
$C_\infty$.
\end{remark}

We prove the following lemma that will be useful in the sequel (it
can be seen as a slight generalization of
\cite[Proposition~2.1.9]{AT2003}).

\begin{lemma}\label{lemma:conirecessione} Let $A$ and $B$ be nonempty
closed convex sets in $X$ such that $\m(A,B)$ is nonempty. Then
$$A_\infty\cap B_\infty=[\m(A,B)]_\infty.$$
\end{lemma}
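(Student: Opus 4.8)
The plan is to realize $\m(A,B)$ as an ordinary intersection of two closed convex sets and then invoke the standard intersection formula for asymptotic cones. Set $\delta=\dist(A,B)$ and consider the sublevel set
$$L=\{x\in X;\ \dist(x,B)\le \delta\}.$$
Since $x\mapsto \dist(x,B)$ is convex and continuous, $L$ is a nonempty closed convex set; moreover, in the finite-dimensional setting the infimum defining $\dist(x,B)$ is attained, so that $L=B+\delta B_X$. Because $\delta=\min_{a\in A}\dist(a,B)$, every $a\in A$ satisfies $\dist(a,B)\ge\delta$, with equality exactly on $\m(A,B)$; hence $\m(A,B)=A\cap L$. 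This reformulation is the heart of the matter, and it also explains in what sense the statement extends \cite[Proposition~2.1.9]{AT2003}: when $A\cap B\neq\emptyset$ one has $\delta=0$, $L=B$, and the lemma reduces to the usual identity $(A\cap B)_\infty=A_\infty\cap B_\infty$.

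First I would apply the intersection formula for asymptotic cones to the pair $A$, $L$. Since $\m(A,B)=A\cap L$ is nonempty by hypothesis, \cite[Proposition~2.1.9]{AT2003} yields $[\m(A,B)]_\infty=(A\cap L)_\infty=A_\infty\cap L_\infty$. It then remains only to identify $L_\infty$ with $B_\infty$.

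The main point is therefore to show that enlarging $B$ by a bounded set leaves its asymptotic cone unchanged, i.e. $L_\infty=B_\infty$. The inclusion $B_\infty\subseteq L_\infty$ is immediate from $B\subseteq L$ together with the monotonicity of the asymptotic cone on closed convex sets (if $C_1\subseteq C_2$ and $d\in (C_1)_\infty$, then for any $x\in C_1$ one has $x+td\in C_1\subseteq C_2$ for all $t\ge 0$, so $d\in (C_2)_\infty$). For the reverse inclusion, take $d\in L_\infty$ and fix $x_0\in L$; then $x_0+nd\in L$ for every $n\in\N$, so there exist $b_n\in B$ with $\|x_0+nd-b_n\|\le\delta$. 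Dividing by $n$ and letting $n\to\infty$ gives $b_n/n\to d$, whence $d\in B_\infty$ by the characterization of the asymptotic cone as the set of asymptotic directions of $B$. Equivalently, one may argue directly from $L=B+\delta B_X$ and the fact that the asymptotic cone of the Minkowski sum of a closed convex set with a compact set coincides with that of the closed convex set.

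Combining the two steps gives $[\m(A,B)]_\infty=A_\infty\cap L_\infty=A_\infty\cap B_\infty$, as desired. I expect the only delicate step to be the inclusion $L_\infty\subseteq B_\infty$: it is exactly here that the unbounded geometry of $B$ enters, and the argument must pass to asymptotic directions rather than rely on the defining property of the recession cone at a single base point.
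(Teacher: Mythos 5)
Your proof is correct, and it takes a genuinely different route from the paper's. The paper argues both inclusions directly from the definition of the recession cone: for $A_\infty\cap B_\infty\subset[\m(A,B)]_\infty$ it notes that translating a minimal-distance pair $(a,b)$ by a common recession direction preserves $\|a-b\|=\dist(A,B)$, so the whole ray $a+td$ lies in $\m(A,B)$; for the reverse inclusion it uses an elementary averaging trick: from $\dist(b+ntd,B)\le 2\,\dist(A,B)$ it picks $d_n\in B$ nearby, observes that $b+\frac{d_n-b}{n}\in B$ by convexity and converges to $b+td$, and concludes $b+td\in B$ by closedness. You instead reformulate $\m(A,B)=A\cap L$ with $L=\{x:\dist(x,B)\le\delta\}=B+\delta B_X$, invoke the intersection formula $(A\cap L)_\infty=A_\infty\cap L_\infty$ of \cite[Proposition~2.1.9]{AT2003}, and reduce the lemma to the invariance $L_\infty=B_\infty$ under bounded enlargement. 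This is more modular, and it makes precise the sense in which the lemma generalizes Proposition~2.1.9 (the case $\delta=0$), which the paper only asserts in passing. Two caveats: your step ``$b_n/n\to d$, whence $d\in B_\infty$'' relies on the sequential (asymptotic-directions) characterization of the asymptotic cone, which is not the definition used in the paper (there $C_\infty$ is defined as the base-point-independent recession cone $D(x)$); this characterization is standard and available in the cited reference, but strictly speaking it needs a citation, and a self-contained substitute for it would essentially reproduce the paper's averaging trick (with $b\in B\subset L$ one has $\dist(b+td,B)\le\delta$ for all $t$, and one must still conclude $b+td\in B$). So the core analytic content coincides; your packaging isolates it in the cleaner statement $L_\infty=B_\infty$ at the cost of importing the intersection formula, while the paper's proof is longer but entirely self-contained given only \cite[Proposition~2.1.5]{AT2003}.
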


\begin{proof} Let $a\in A$ and $b\in B$ be such that
$\|b-a\|=\dist(A,B)$.

Let us prove that $A_\infty\cap B_\infty\subset[\m(A,B)]_\infty$.
Let $d\in A_\infty\cap B_\infty=D(a)\cap D(b)$. Since, for each
$t>0$,
$$\|a-b\|=\|a+td-(b+td)\|=\dist(A,B),$$
we have that $a+td\in\m(A,B)$, whenever $t>0$. Hence $d\in
[\m(A,B)]_\infty$.

For the reverse inclusion, suppose that $a+td\in \m(A,B)$,
whenever $t>0$. Clearly, $d\in A_\infty$. Now, we prove that $d\in
B_\infty$. Let us fix $t>0$ and $n\in\N$, and let us observe that
$$\dist(b+ntd, B)\leq2\,\dist(A,B).$$
Hence, there exists $d_n\in B$ such that
$$\|b+ntd-d_n\|\leq2\,\dist(A,B).$$
Then,
$$\textstyle \|b+td-(b+\frac{d_n-b}n)\|\leq\frac2n\,\dist(A,B).$$
By the arbitrariness of $n\in\N$, since $b+\frac{d_n-b}n\in B$,
and since $B$ is closed, it holds that $b+td\in B$. By the
arbitrariness of $t>0$, the thesis is proved. \end{proof}

The following theorem is the main result of this section. It
proves that, under mild assumption, the 2-sets convex feasibility
problem has a considerable degree of stability.

\begin{theorem}
\label{thm:stability principale} Let $\{ A_{n}\} $ and $\{
B_{n}\}$ be two sequences of nonempty closed convex sets in $X$,
$A$ and $B$ two nonempty closed convex subsets of $X$ such that
$$A_{n}\to A\quad\text{and}\quad B_{n}\to B,$$
for the Kuratowski-Painlev\'{e} convergence. Suppose that
$\m(A,B)$ is a nonempty bounded set. Let $\left\{ a_{n}\right\} $
and $\left\{ b_{n}\right\}$ be  sequences such that $a_{n}\in
A_{n}$, $b_{n}\in B_{n}$ ($n\in\N$) and
\[
\text{dist}(A_{n},B_{n})=\left\Vert a_{n}-b_{n}\right\Vert.
\]
Then there exists a subsequence $\left\{ a_{n_{k}}\right\}$ such
that
\[
\lim_{k\rightarrow\infty}a_{n_{k}}=c\in \m(A\,,B).
\]
Moreover, if $\m(A,B)=\{a\}$ then $a_n\to a$.
\end{theorem}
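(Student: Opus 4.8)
The plan is to reduce everything to a boundedness statement: since $X$ is finite-dimensional, once I know $\{a_n\}$ (and likewise $\{b_n\}$) is bounded, convergent subsequences come for free and the real content is identifying the limit. The quantitative input is Fact~\ref{distanzanulla}, which gives $\limsup_n \dist(A_n,B_n)\le \dist(A,B)<\infty$, the finiteness being guaranteed by $\m(A,B)\neq\emptyset$. Because $\|a_n-b_n\|=\dist(A_n,B_n)$, the sequence $\{a_n-b_n\}$ is bounded, so $\{a_n\}$ and $\{b_n\}$ are simultaneously bounded or unbounded, and when unbounded they share the same asymptotic direction.

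For boundedness I would argue by contradiction: suppose along a subsequence $\|a_{n_k}\|\to\infty$. Using compactness of $S_X$, pass to a further subsequence so that $a_{n_k}/\|a_{n_k}\|\to d$ with $\|d\|=1$; then also $b_{n_k}/\|b_{n_k}\|\to d$. The claim is $d\in A_\infty\cap B_\infty$. To see $d\in A_\infty$, fix $a\in A$; lower Kuratowski-Painlev\'e convergence produces $\alpha_{n_k}\in A_{n_k}$ with $\alpha_{n_k}\to a$. For each fixed $t>0$, the point $\alpha_{n_k}+t\,\frac{a_{n_k}-\alpha_{n_k}}{\|a_{n_k}-\alpha_{n_k}\|}$ lies on the segment $[\alpha_{n_k},a_{n_k}]\subset A_{n_k}$ once $k$ is large, and it converges to $a+td$; the upper Kuratowski-Painlev\'e convergence then forces $a+td\in A$ for every $t>0$, i.e. $d\in D(a)=A_\infty$. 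Running the same argument with $\{b_{n_k}\}$ and a point of $B$ gives $d\in B_\infty$. By Lemma~\ref{lemma:conirecessione}, $d\in[\m(A,B)]_\infty$, but $\m(A,B)$ is bounded so its asymptotic cone is $\{0\}$, contradicting $\|d\|=1$. Hence $\{a_n\}$ and $\{b_n\}$ are bounded.

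Next I would extract $a_{n_k}\to c$ and, refining once more, $b_{n_k}\to c'$. Upper Kuratowski-Painlev\'e convergence yields $c\in A$ and $c'\in B$. Then $\|c-c'\|=\lim_k\dist(A_{n_k},B_{n_k})\le\dist(A,B)$ by Fact~\ref{distanzanulla}, while $\dist(A,B)\le\dist(c,B)\le\|c-c'\|$ because $c\in A$ and $c'\in B$. Thus all these quantities coincide, so $\dist(c,B)=\dist(A,B)$, that is $c\in\m(A,B)$, which proves the first assertion. For the ``moreover'' part, suppose $\m(A,B)=\{a\}$ but $a_n\not\to a$, and pick a subsequence with $\|a_{n_j}-a\|\ge\delta>0$. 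This subsequence is bounded by the first step, so the argument just given extracts a further subsequence converging to a point of $\m(A,B)=\{a\}$, i.e. to $a$, contradicting $\|a_{n_j}-a\|\ge\delta$; hence $a_n\to a$.

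The hard part will be the boundedness step, specifically checking that the common limiting direction $d$ of the normalized unbounded sequences genuinely lies in both asymptotic cones. This is exactly where the two halves of Kuratowski-Painlev\'e convergence cooperate: the lower half supplies the approximating anchors $\alpha_{n_k}\to a$ (and its analogue in $B$), while the upper half traps each ray $a+td$ inside the limit set; Lemma~\ref{lemma:conirecessione} is then what converts $d\in A_\infty\cap B_\infty$ into a contradiction with the boundedness of $\m(A,B)$.
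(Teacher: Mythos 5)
Your proof is correct and follows essentially the same route as the paper's: boundedness of $\{a_n\}$ and $\{b_n\}$ by contradiction via a common limiting direction $d$, placed in $A_\infty\cap B_\infty$ through the lower/upper halves of the Kuratowski-Painlev\'{e} convergence, then Lemma~\ref{lemma:conirecessione} together with the boundedness of $\m(A,B)$ to exclude $d\neq 0$, and finally compactness plus Fact~\ref{distanzanulla} to identify the limit as a point of $\m(A,B)$. The only cosmetic difference is that you normalize $a_{n_k}$ itself (and transfer the direction to $b_{n_k}$), whereas the paper normalizes the anchored differences $a_{n}-a_{n}'$ with $a_n'\to a$, which amounts to the same segment argument.
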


\begin{proof} Let us prove the first part of the theorem. By
Fact~\ref{distanzanulla}, it holds
\begin{equation}\label{eq:distanza tende a zero}
\limsup_{n} \|a_n-b_n\|\leq\dist(A,B).
\end{equation}
We claim that $\{a_n\}$ and $\{b_n\}$ are bounded.

Suppose that this is not the case and let $a\in A$ and $b\in B$
such that $\|a-b\|=\dist(A,B)$. Without loss of generality, we can
suppose that $\|a_n\|,\|b_n\|\to\infty$. By the lower part of the
convergence of $\left\{ A_{n}\right\} $ there exists a sequence
$\left\{ a_{n}'\right\} $ such that $a_{n}'\in A_{n}$ and
$a_{n}'\rightarrow a$. Since $A_{n}$ is a convex set, for any
$\alpha\in\left[0,1\right]$ it holds:
\[
\alpha a_{n}+(1-\alpha)a_{n}'=a_{n}'+\alpha(a_{n}-a_{n}')\in
A_{n}.
\]

The sequence
\[
\left\{ \frac{a_{n}-a_{n}'}{\left\Vert a_{n}-a_{n}'\right\Vert
}\right\}
\]
has a subsequence converging to $d\neq0.$ There is no loss of
generality in assuming
\[
\lim_{n\rightarrow+\infty}\frac{a_{n}-a_{n}'}{\left\Vert
a_{n}-a_{n}'\right\Vert }=d.
\]
Therefore, it holds
\[
a+\beta
d=\lim_{n\rightarrow+\infty}\left(a_{n}'+\frac{\beta}{\left\Vert
a_{n}-a_{n}'\right\Vert }(a_{n}-a_{n}')\right).
\]

Since for every $\beta>0$ there exists $n_{2}(\beta)\in\N$ such
that $\frac{\beta}{\left\Vert a_{n}-a_{n}'\right\Vert
}\in\left[0,1\right],$ whenever $n>n_{2}(\beta)$, it holds
\[
a+\beta d\in A,
\]
for every $\beta>0.$ Hence, $d\in A_{\infty}.$

Analogously, we may prove that
\[
\lim_{n\rightarrow+\infty}\frac{b_{n}-b_{n}'}{\left\Vert
b_{n}-b_{n}'\right\Vert }=d'\in B_{\infty},
\]
where $\left\{ b_{n}'\right\} $ is a sequence such that $b_{n}'\in
B_{n}$ and $b_{n}'\rightarrow b$.

Let us observe that $\{a'_n\}$, $\{b'_n\}$ and $\{a_n-b_n\}$ are
bounded sequences in $X$. Since $\|a_n\|,\|b_n\|\to\infty$, we
have $\|a_n-a'_n\|\sim\|b_n-b'_n\|$ and hence
$$d=\lim_{n\rightarrow+\infty}\frac{a_{n}-a_{n}'}{\left\Vert
a_{n}-a_{n}'\right\Vert
}=\lim_{n\rightarrow+\infty}\frac{b_{n}-b_{n}'}{\left\Vert
b_{n}-b_{n}'\right\Vert }=d',
$$
Therefore we have
\[
0\neq d\in A_{\infty}\cap B_{\infty}.
\]
By Lemma~\ref{lemma:conirecessione}, we have
\[
A_{\infty}\cap B_{\infty}=[\m(A,B)]_{\infty}.
\]
Then $\m(A,B)$ is not a bounded set, a contradiction.

By the claim  and compactness, there exist two subsequences
$\left\{ a_{n_{k}}\right\} $ and $\left\{ b_{n_{k}}\right\} $,
respectively of $\left\{ a_{n}\right\} $ and of $\left\{
b_{n}\right\} $, such that
\[
\lim_{k\rightarrow+\infty}a_{n_{k}}=u,\quad\lim_{k\rightarrow+\infty}b_{n_{k}}=v,
\]
where $u\in A$ and $v\in B$. By Fact~\ref{distanzanulla},
$\|u-v\|=\dist(A,B)$ and the thesis is proved.

The second part of the theorem follows easily by the first part.
\end{proof}

\begin{remark}\label{remark:stabilityfunctions} The above theorem can be proved in an alternative way,
by using known results concerning stability theory for convex
optimization problem. However, we preferred to present a direct
and more geometrical proof. We give a sketch of the alternative
proof below. (See, e.g., \cite{LUCC} for definitions and main
results about convergence of functions and well-posed problems).

Let $f,f_n:X\times X\to(\-\infty,\infty]$ ($n\in\N$) the convex
lower semicontinuous functions defined as follows. For each
$(x_1,x_2)\in X\times X$ and $n\in\N$, put
$$
f(x_1,x_2)=\begin{cases}
\|x_1-x_2\|\ \ \ \ \ \  &\text{if $x_1\in A$ and $x_2\in B$;}\\
\infty\ \ \ \ \ \  &\text{otherwise;}
\end{cases}
$$
and
$$
f_n(x_1,x_2)=\begin{cases}
\|x_1-x_2\|\ \ \ \ \ \  &\text{if $x_1\in A_n$ and $x_2\in B_n$;}\\
\infty\ \ \ \ \ \  &\text{otherwise.}
\end{cases}
$$
Since $A_{n}\to A$ and $B_{n}\to B$ for the
Kuratowski-Painlev\'{e} convergence (equivalently, for the
Attouch-Wets convergence), we have that $f_n\to f$ for the
Kuratowski-Painlev\'{e} convergence. Moreover, proceeding as in
the proof of Theorem~\ref{thm:stability principale}, we may prove
that $f$ is Tykhonov well-posed in the generalized sense. Hence,
we can apply \cite[Theorem~10.2.24]{LUCC} to obtain the thesis.
\end{remark}

Whenever the two limit sets are such that $A\cap B\neq\emptyset$,
we have the following corollary.

\begin{corollary}
\label{thm:stabilitycorollario} Let $\{ A_{n}\} $ and $\{ B_{n}\}$
be two sequences of nonempty closed convex sets in $X$, $A$ and
$B$ two nonempty closed convex subsets of $X$ such that
$$A_{n}\to A\quad\text{and}\quad B_{n}\to B,$$
for the Kuratowski-Painlev\'{e} convergence. Suppose that $A\cap
B$ is a nonempty bounded set. Let $\left\{ a_{n}\right\} $ and
$\left\{ b_{n}\right\}$ be  sequences such that $a_{n}\in A_{n}$,
$b_{n}\in B_{n}$ ($n\in\N$) and
\[
\text{dist}(A_{n},B_{n})=\left\Vert a_{n}-b_{n}\right\Vert.
\]
Then there exist two subsequences $\left\{ a_{n_{k}}\right\} $ and
$\left\{ b_{n_{k}}\right\} $ such that
\[
\lim_{k\rightarrow\infty}a_{n_{k}}=\lim_{k\rightarrow\infty}b_{n_{k}}=c\in
A\cap B.
\]
Moreover, if $A\cap B=\{c\}$ then $a_n,b_n\to c$.
\end{corollary}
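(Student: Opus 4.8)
The plan is to derive the corollary directly from Theorem~\ref{thm:stability principale} by observing that the hypothesis $A\cap B\neq\emptyset$ forces $\dist(A,B)=0$, which in turn identifies $\m(A,B)$ with $A\cap B$. First I would note that if $A\cap B\neq\emptyset$, then picking any $c\in A\cap B$ gives $\dist(A,B)\leq\dist(c,B)=0$, so $\dist(A,B)=0$. Consequently, for $a\in A$ we have $a\in\m(A,B)$ if{f} $\dist(a,B)=0$, i.e.\ (since $B$ is closed) if{f} $a\in B$; hence $\m(A,B)=A\cap B$. In particular the boundedness hypothesis on $A\cap B$ is exactly the boundedness hypothesis on $\m(A,B)$ required by the theorem, and $\m(A,B)$ is nonempty.

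Next I would apply Theorem~\ref{thm:stability principale}: it yields a subsequence $\{a_{n_k}\}$ with $a_{n_k}\to c\in\m(A,B)=A\cap B$. It remains to control $\{b_{n_k}\}$. The key point is that, by Fact~\ref{distanzanulla}, $\dist(A_n,B_n)\to 0$ (the ``in particular'' clause applies since $A\cap B\neq\emptyset$), so $\|a_{n_k}-b_{n_k}\|=\dist(A_{n_k},B_{n_k})\to 0$. Combining this with $a_{n_k}\to c$ gives $b_{n_k}\to c$ as well by the triangle inequality:
\[
\|b_{n_k}-c\|\leq\|b_{n_k}-a_{n_k}\|+\|a_{n_k}-c\|\to 0.
\]
Thus $\lim_k a_{n_k}=\lim_k b_{n_k}=c\in A\cap B$, establishing the first assertion.

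For the final statement, if $A\cap B=\{c\}$ then $\m(A,B)=\{c\}$ is a singleton, so the ``Moreover'' clause of Theorem~\ref{thm:stability principale} gives $a_n\to c$ for the full sequence; the same triangle-inequality argument with $\dist(A_n,B_n)\to 0$ then forces $b_n\to c$ for the full sequence as well. I do not expect any serious obstacle here: the entire corollary is a matter of correctly translating $A\cap B$ into the language of $\m(A,B)$ and invoking Fact~\ref{distanzanulla} to pin down $\{b_n\}$ once $\{a_n\}$ is controlled. The only point requiring a little care is ensuring the subsequence extracted for $\{a_n\}$ is the one along which $\{b_n\}$ is also shown to converge, but this is automatic because the convergence $\|a_{n}-b_{n}\|\to 0$ holds along the full sequence, so it holds along any subsequence chosen for $\{a_n\}$.
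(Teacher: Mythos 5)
Your proof is correct and follows exactly the route the paper intends: the paper states this corollary without proof as an immediate consequence of Theorem~\ref{thm:stability principale}, and your derivation (identifying $\m(A,B)=A\cap B$ when the intersection is nonempty, applying the theorem, and using Fact~\ref{distanzanulla} with the triangle inequality to carry the convergence over to $\{b_{n_k}\}$) is precisely that intended argument.
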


The following examples show that both the assumptions in Theorem
\ref{thm:stability principale} play an independent role and each
of them cannot be deleted. The first one focuses on the role of
convexity assumptions.

\begin{example}
\label{exa:main theorem convex} Let us consider the sets
$(n\geq2)$:
\[
\begin{array}[t]{c}
\textstyle A_{n}=\left\{
\left(x_{1},x_{2}\right)\in\mathbb{R}^{2}:-1\leq
x_{1}\leq-\frac{1}{n},
\,{n}\frac{x_{1}+1}{n-1}\leq x_{2}\leq2+{n}\frac{x_{1}+1}{1-n}\right\} \cup\\
\textstyle \left\{
\left(x_{1},x_{2}\right)\in\mathbb{R}^{2}:-1\leq
x_{1}\leq-\frac{1}{2n},\,\frac{2}{x_{1}^{2}}\leq
x_{2}\leq8n^{2}\right\}
\end{array}
\]
and
\[
\begin{array}[t]{c}
B_{n}=\left\{ \left(x_{1},x_{2}\right)\in\mathbb{R}^{2}:\frac{1}{n}\leq x_{1}\leq1,\,{n}\frac{x_{1}-1}{n-1}\leq x_{2}\leq2+{n}\frac{x_{1}-1}{1-n}\right\} \cup\\
\left\{ \left(x_{1},x_{2}\right)\in\mathbb{R}^{2}:\frac{1}{2n}\leq
x_{1}\leq1,\,\frac{2}{x_{1}^{2}}\leq x_{2}\leq8n^{2}\right\} .
\end{array}
\]
The sequences $\left\{ A_{n}\right\} $ and $\left\{ B_{n}\right\}
$ converge respectively to
\[
\begin{array}[t]{c}
A=\left\{ \left(x_{1},x_{2}\right)\in\mathbb{R}^{2}:-1\leq x_{1}\leq0,\, x_{1}+1\leq x_{2}\leq1-x_{1}\right\} \cup\\
\left\{ \left(x_{1},x_{2}\right)\in\mathbb{R}^{2}:-1\leq
x_{1}<0,\, x_{2}\geq\frac{2}{x_{1}^{2}}\right\}
\end{array}
\]
and
\[
\begin{array}[t]{c}
B=\left\{ \left(x_{1},x_{2}\right)\in\mathbb{R}^{2}:0\leq x_{1}\leq1,\,1-x_{1}\leq x_{2}\leq1+x_{1}\right\} \cup\\
\left\{ \left(x_{1},x_{2}\right)\in\mathbb{R}^{2}:0\leq
x_{1}\leq1,\, x_{2}\geq\frac{2}{x_{1}^{2}}\right\} .
\end{array}
\]
It is easy to see that $A\cap B=\left\{ (0,1)\right\} $ and
\[
A_{n}\stackrel{K}{\rightarrow}A,\quad
B_{n}\stackrel{K}{\rightarrow B}.
\]

All the assumptions of Theorem \ref{thm:stability principale} are
satisfied except for the convexity of $A_{n}$ and $B_{n}$. The
minimal distance between the sets $A_{n}$ and $B_{n}$ is achieved
only at the pair of points
\[
a_{n}=\left(-\frac{1}{2n},8n^{2}\right)\in A_{n}\:\mathrm{and}\:
b_{n}=\left(\frac{1}{2n},8n^{2}\right)\in B_{n}.
\]
It is apparent that the sequences $\left\{ a_{n}\right\} $ and
$\left\{ b_{n}\right\} $ have no convergent subsequences. Hence
the thesis of Theorem \ref{thm:stability principale} does not
hold.
\end{example}

The second example proves that the boundedness assumption on the
set $\m(A,B)$ cannot be dropped.
\begin{example}
\label{exa:main theorem bounded assumption}Let $A_{n}$ and $B_{n}$
be defined as in Example~\ref{exa:main theorem convex}. Let us
consider the sets
\[
C_{n}=\mathrm{conv}(A_{n})\:\mathrm{and}\,
D_{n}=\mathrm{conv}(B_{n}).
\]
It is easy to see that
\[
C_{n}\stackrel{K}{\rightarrow}C,\quad
D_{n}\stackrel{K}{\rightarrow}D,
\]
where
\[
C=\left\{ \left(x_{1},x_{2}\right)\in\mathbb{R}^{2}:-1\leq
x_{1}\leq0,\, x_{1}+1\leq x_{2}\right\}
\]
and
\[
D=\left\{ \left(x_{1},x_{2}\right)\in\mathbb{R}^{2}:0\leq
x_{1}\leq1,\,1-x_{1}\leq x_{2}\right\} .
\]
Moreover, we have $C\cap D=\left\{
\left(x_{1},x_{2}\right)\in\mathbb{R}^{2}:x_{1}=0,\,
x_{2}\geq1\right\} $.

All the assumptions of Theorem~\ref{thm:stability principale} are
satisfied except for the boundedness of the set $C\cap D$. The
minimal distance between the sets $C_{n}$ and $D_{n}$ is achieved
only at the same pair of points $a_{n}\in C_{n}$ and $b_{n}\in
D_{n}$ as in Example~\ref{exa:main theorem convex}. Of course, as
in the previous example both the sequences $\left\{ a_{n}\right\}
$ and $\left\{ b_{n}\right\} $ have no convergent subsequences.
Therefore the thesis of Theorem~\ref{thm:stability principale}
does not hold.
\end{example}

\section{Convergence of minimal distance points of a pair of convex
sets: the infinite-dimensional case}\label{Infinite-dimensional}

In an infinite-dimensional setting, we need some strengthenings of
the assumptions to obtain stability results for our problems.
Indeed, Example~\ref{esempiocono-cono}, in
Section~\ref{sectionesempi}, shows that an analogue of
Theorem~\ref{thm:stability principale} does not hold, even if we
assume that the sequences of sets converge for the Hausdorff
convergence and that the space $X$ is a Hilbert space. In this
section, we prove that an additional geometric condition on the
limit sets ensures the stability result (see
Theorem~\ref{puntolur} below). Moreover, we use the Attouch-Wets
convergence of sets instead of the Kuratowski-Painlev\'{e}
convergence.

\medskip

We start with some definitions and preliminary results. Let us
recall that a {\em body} in $X$ is a closed convex set in $X$ with
nonempty interior.

\begin{definition}[{see, e.g., \cite[Definition~7.10]{FHHMZ}}] Let $A$ be a nonempty subset of a normed space $X$. A point $a\in A$
is called a strongly exposed point of $A$ if there exists  a
support functional $f\in X^*\setminus\{0\}$ for $A$ in $a$ (i.e.,
$f (a) = \sup f(A)$), such that  $x_n\to a$ for all sequences
$\{x_n\}$ in $A$ such that $\lim f(x_n) = \sup f(A)$. In this
case, we say that $f$ strongly exposes $A$ at $a$.
\end{definition}

Let us observe that $f\in S_{X^*}$ strongly exposes $A$ at $a$
if{f} $f(a)=\sup f(A)$ and
$$\mathrm{diam}\bigl(S(f,\alpha,A)\bigr)\to0 \text{ as } \alpha\to 0.$$

\begin{definition}
Let $A\subset X$ be a body. We say that $x\in\partial A$ is an
{\em LUR (locally uniformly rotund) point} of $A$ if for each
$\epsilon>0$ there exists $\delta>0$ such that if $y\in\partial A$
and $\dist(\partial A,(x+y)/2)<\delta$ then $\|x-y\|<\epsilon$. If
$A=B_X$, this definition coincides with the standard definition of
local uniform rotundity of the norm at $x$.

Moreover, we say that $A$ is an {\em LUR body} if each point in
$\partial A$ is an LUR point of $A$.
\end{definition}

\begin{lemma}\label{slicelimitatoselur} Let $A$ be a body in $X$
and suppose that $a\in\partial A$ is an LUR point of $A$. Then, if
$f\in S_{X^*}$ is a support functional for $A$ in $a$, $f$
strongly exposes $a$. Moreover, every slice $S$ of the form
$S=S(f,\alpha,A)$ is a bounded set.
\end{lemma}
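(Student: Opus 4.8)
The plan is to prove the two assertions in sequence, since the first (strong exposure) will essentially hand us the second (boundedness of slices). Throughout I fix the support functional $f\in S_{X^*}$ with $f(a)=\sup f(A)$, and I work with the slices $S(f,\alpha,A)=\{x\in A;\, f(x)\geq \sup f(A)-\alpha\}$. Recall from the observation just before the statement that $f$ strongly exposes $A$ at $a$ precisely when $\mathrm{diam}\bigl(S(f,\alpha,A)\bigr)\to 0$ as $\alpha\to 0$; I would verify this diameter condition directly from the LUR hypothesis rather than chase sequences.

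First I would translate the LUR condition into geometric language. The hypothesis says: for every $\epsilon>0$ there is $\delta>0$ so that whenever $y\in\partial A$ satisfies $\dist((x+y)/2,\partial A)<\delta$ we get $\|x-y\|<\epsilon$ (here $x=a$). The heuristic is that points $y$ in a thin slice $S(f,\alpha,A)$ lie close to the supporting hyperplane $\{f=\sup f(A)\}$ at $a$, so their midpoints with $a$ are nearly on that hyperplane too, hence close to $\partial A$; LUR then forces $\|a-y\|$ small. Making this rigorous is the heart of the argument. Given $\epsilon>0$, take the $\delta$ from LUR and I would show that for $\alpha$ small enough every $y\in S(f,\alpha,A)$ satisfies $\dist((a+y)/2,\partial A)<\delta$. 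For the midpoint $m=(a+y)/2$ one has $f(m)\geq \sup f(A)-\alpha/2$, so $m$ is within $f$-distance $\alpha/2$ of the hyperplane; since $A$ is a body, using an interior point $x_0\in\inte A$ one can move from $m$ a controlled amount in the direction of the hyperplane to reach $\partial A$, and the distance moved is $O(\alpha)$ uniformly (the constant depending on $f(x_0)-\sup f(A)$ and $\dist(x_0,\partial A)$, i.e. on the ball around $x_0$ inside $A$). Choosing $\alpha$ so that this $O(\alpha)$ bound is below $\delta$ gives $\dist(m,\partial A)<\delta$, whence $\|a-y\|<\epsilon$ by LUR. Taking sup over $y$ and then over pairs, $\mathrm{diam}\bigl(S(f,\alpha,A)\bigr)\leq 2\epsilon$ for small $\alpha$, which is exactly strong exposure.

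The main obstacle is the quantitative control in the previous step: I must produce, from the body hypothesis, a uniform estimate showing that a point $m$ with $f(m)$ within $\alpha/2$ of $\sup f(A)$ can be pushed to $\partial A$ by moving a distance controlled linearly in $\alpha$. This is where the nonempty interior is essential — it guarantees $f$ is bounded below the supremum by a definite margin at some interior ball, giving the needed Lipschitz-type comparison between the functional gap $\alpha$ and the metric distance to $\partial A$. I would isolate this as the one genuine computation and keep it elementary, using only the open ball $B(x_0,r)\subset A$ and convexity.

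For the moreover part, once strong exposure is established the slices shrink in diameter, so there is $\alpha_0>0$ with $\mathrm{diam}\bigl(S(f,\alpha_0,A)\bigr)<\infty$; hence $S(f,\alpha_0,A)$ is bounded. For a general slice $S=S(f,\alpha,A)$ with arbitrary $\alpha>0$, if $\alpha\leq\alpha_0$ then $S\subset S(f,\alpha_0,A)$ is bounded; if $\alpha>\alpha_0$ I would rule out unboundedness via the asymptotic cone. An unbounded $S(f,\alpha,A)$ would yield a nonzero recession direction $d\in A_\infty$ with $f(d)\geq 0$; but $f(d)>0$ is impossible since $f$ is bounded above on $A$, so $f(d)=0$, forcing $d\in A_\infty$ to lie in the hyperplane direction and making $a+td$ stay in every slice $S(f,\beta,A)$ for all $\beta>0$ and all $t>0$, contradicting $\mathrm{diam}\bigl(S(f,\beta,A)\bigr)\to 0$. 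Therefore every slice is bounded, completing the proof.
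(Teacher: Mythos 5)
Your first half follows essentially the same route as the paper's: after translating so that $a=0$, the paper fixes $w\in\inte A$ (so $f(w)<0$), pushes a slice point $z\in S(f,\alpha,A)$ onto the hyperplane $\{f=0\}$ via $z'=z-\frac{f(z)}{f(w)}w$, and notes that $[\frac z2,\frac{z'}2]$ must meet $\partial A$, giving $\dist\bigl(\partial A,\frac z2\bigr)\leq\frac12\frac{\|w\|}{|f(w)|}\alpha$ --- exactly your ``linear-in-$\alpha$'' estimate with constant coming from the interior point. So the strong-exposure part is sound, and it shares with the paper the same minor imprecision (the LUR condition is stated only for $y\in\partial A$, while slice points need not be boundary points; both arguments can be repaired by first sliding each slice point to $\partial A$ along the ray away from $w$, which moves it by at most $\frac{\|w\|}{|f(w)|}\alpha$).

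The second half, however, has a genuine gap. You deduce from unboundedness of $S(f,\alpha,A)$ the existence of a nonzero recession direction $d\in A_\infty$ with $f(d)\geq 0$. The implication ``unbounded closed convex set $\Rightarrow$ nonzero asymptotic direction'' is true in finite dimensions (normalize and extract a norm-convergent subsequence), but it is false in infinite-dimensional spaces, and this lemma is stated for an arbitrary normed space $X$ precisely because it is the engine of the infinite-dimensional result (Theorem~\ref{puntolur}). For instance, $\cconv(\{n e_n;\, n\in\N\})\subset\ell_2$ is closed, convex and unbounded, yet its asymptotic cone is $\{0\}$; weak-limit substitutes do not help either, since $y_n/\|y_n\|$ may converge weakly to $0$. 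So the case $\alpha>\alpha_0$ of your argument does not go through. The paper avoids recession cones altogether by a scaling trick which you could adopt verbatim: if $y_n\in S(f,\alpha,A)$ with $\|y_n\|\to\infty$, then (with $a=0\in A$) convexity gives $z_n=\frac{y_n}{\|y_n\|}\in A$ and $f(z_n)\geq-\alpha/\|y_n\|$, i.e.\ $z_n\in S(f,\alpha/\|y_n\|,A)$; since $0$ lies in every such slice and their diameters tend to $0$ by the first part, we get $\|z_n\|\to0$, contradicting $\|z_n\|=1$. This settles boundedness of all slices in one stroke, with no case split on $\alpha$.
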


The first part of the lemma is well-known in the case the body is
a ball (see e.g. \cite[Exercise~8.27]{FHHMZ}) and in the general
case the proof is similar. However, for the convenience of the
reader we include a proof.

\begin{proof} Without loss of generality, we can suppose that
$a=0$. Fix $w\in \inte A$ and observe that $f(w)<0$.

Let us prove the first part of the lemma. Let $\alpha>0$, $z\in
S=S(f,\alpha,A)$ and
$$\textstyle z'=z-\frac {f(z)}{f(w)}w.$$
Since $\frac z2\in A$ and $f(z')=0$, we have that $[\frac z2,\frac
{z'}2]\cap \partial A\neq\emptyset$. Hence
$$\textstyle \dist(\partial A,\frac z2)\leq \frac12\|z'-z\|\leq \frac12\frac{\|w\|}{|f(w)|}\alpha.$$
Since $a=0$ is an LUR point of $A$, if $\alpha\to0$ then
$\mathrm{diam}(S)\to0$ and the proof is concluded.

Now, the second part of the lemma follows easily. Suppose on the
contrary that there exists $\alpha>0$ such that $S=S(f,\alpha,A)$
is unbounded. Then there exists a sequence $\{y_n\}$ in
$S\setminus\{0\}$ such that $\|y_n\|\to\infty$. Put
$z_n=\frac{y_n}{\|y_n\|}$ and observe that $\|z_n\|=1$ and $z_n\in
S(f,\alpha/\|y_n\|,A)$, a contradiction by the first part of the
lemma.
\end{proof}

\begin{lemma}\label{convnorma} Let $X$ be a normed space. There exists a constant $\Gamma>0$ such
that if $R>1$, if $x,y,a,b\in X$ are such that $\|x\|,\|y\|< R$
and $\|a\|,\|b\|>2 R$, then, if $[x,a]\cap RS_X=\{a'\}$ and
$[y,b]\cap RS_X=\{b'\}$, it holds
$$\|b'-a'\|\leq \Gamma\max\{\|x\|,\|y\|,\|a-b\|\}.$$
\end{lemma}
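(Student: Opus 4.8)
The plan is to reduce the two-endpoint situation to the radial case by comparing each crossing point with the crossing point of the segment emanating from the origin. Introduce the radial projections $\hat a=\tfrac{R}{\|a\|}\,a$ and $\hat b=\tfrac{R}{\|b\|}\,b$. Since $\|a\|,\|b\|>2R>R$ while $\|0\|<R$, these are precisely the points of $[0,a]\cap RS_X$ and $[0,b]\cap RS_X$, and both lie on $RS_X$. By the triangle inequality it then suffices to estimate the three terms in
$$\|a'-b'\|\le \|a'-\hat a\|+\|\hat a-\hat b\|+\|\hat b-b'\|.$$

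For the middle term I would invoke the elementary normalization estimate: assuming (without loss of generality) $\|a\|\le\|b\|$, one has $\bigl\|\tfrac{a}{\|a\|}-\tfrac{b}{\|b\|}\bigr\|\le\tfrac{2\|a-b\|}{\|a\|}$, whence
$$\|\hat a-\hat b\|=R\Bigl\|\tfrac{a}{\|a\|}-\tfrac{b}{\|b\|}\Bigr\|\le\tfrac{2R}{\|a\|}\,\|a-b\|\le\|a-b\|,$$
the last inequality using $\|a\|>2R$. This is the only place the hypothesis $\|a\|,\|b\|>2R$ enters.

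The key step is the bound $\|a'-\hat a\|\le 2\|x\|$ (and symmetrically $\|\hat b-b'\|\le 2\|y\|$). Since $a'\in[x,a]$, write $a'=(1-s)x+sa$ with $s\in[0,1]$ and set $\tilde a=a'-(1-s)x=sa$. Both $\tilde a=sa$ and $\hat a=\tfrac{R}{\|a\|}a$ are nonnegative multiples of $a$, so they lie on a common ray through the origin and therefore $\|\tilde a-\hat a\|=\bigl|\,\|\tilde a\|-\|\hat a\|\,\bigr|$. Now $\|\hat a\|=R$ and $\|\tilde a\|=\|a'-(1-s)x\|$ differs from $\|a'\|=R$ by at most $(1-s)\|x\|\le\|x\|$, so $\|\tilde a-\hat a\|\le\|x\|$; combining this with $\|a'-\tilde a\|=(1-s)\|x\|\le\|x\|$ gives $\|a'-\hat a\|\le 2\|x\|$. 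Adding the three estimates yields
$$\|a'-b'\|\le 2\|x\|+\|a-b\|+2\|y\|\le 5\max\{\|x\|,\|y\|,\|a-b\|\},$$
so $\Gamma=5$ works, uniformly in $R$ and in the normed space $X$ (the assumption $R>1$ is not even needed for this estimate).

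The subtle configurations are the near-tangential ones, where $[x,a]$ grazes $RS_X$: a direct argument that tries to control how much the crossing point moves via a transversality or slope estimate degenerates precisely there, forcing an awkward case analysis on the angle of incidence. The point of routing everything through the radial crossings $\hat a,\hat b$ is that it sidesteps this entirely — the collinearity identity $\|\tilde a-\hat a\|=\bigl|\,\|\tilde a\|-\|\hat a\|\,\bigr|$ makes $\|a'-\hat a\|\le 2\|x\|$ hold unconditionally, with no hypothesis on the geometry of the segment relative to the sphere. I expect the only care needed in the write-up is justifying the collinearity identity and the normalization inequality, both of which are routine.
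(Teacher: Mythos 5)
Your proof is correct, and it takes a genuinely different route from the paper's. The paper works directly with the convex-combination parameters: writing $a'=\lambda a+(1-\lambda)x$ and $b'=\mu b+(1-\mu)y$, it derives the bounds $\lambda\leq\frac{R+\|x\|}{\|a\|+\|x\|}$ and $\mu\geq\frac{R-\|y\|}{\|b\|-\|y\|}$, assumes $\lambda>\mu$ without loss of generality, and then pushes $\|b'-a'\|\leq\lambda\|a-b\|+(1-\lambda)\|x-y\|+|\lambda-\mu|(\|y\|+\|b\|)$ through a long chain of elementary estimates, ending with $\Gamma=17$. You instead interpolate the radial crossing points $\hat a=\frac{R}{\|a\|}a$ and $\hat b=\frac{R}{\|b\|}b$: the collinearity identity $\|\tilde a-\hat a\|=\bigl|\,\|\tilde a\|-\|\hat a\|\,\bigr|$ gives $\|a'-\hat a\|\leq 2\|x\|$ and $\|b'-\hat b\|\leq 2\|y\|$ unconditionally, and the normalization inequality $\bigl\|\frac{a}{\|a\|}-\frac{b}{\|b\|}\bigr\|\leq\frac{2\|a-b\|}{\|a\|}$ together with $\|a\|,\|b\|>2R$ gives $\|\hat a-\hat b\|\leq\|a-b\|$; all three estimates check out. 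Your decomposition buys several things: a better constant ($\Gamma=5$ versus $17$), a shorter and more transparent argument that avoids comparing the two segment parameters $\lambda,\mu$ against each other (which is what forces the paper's computation), and a clean accounting of the hypotheses --- it shows $R>1$ is never needed and that $\|a\|,\|b\|>2R$ enters only in the middle term. Since the lemma is only used qualitatively in Theorem~\ref{puntolur} (any universal $\Gamma$ suffices there), either constant serves equally well, but your proof is the cleaner one.
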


\begin{proof} Let $\lambda,\mu\in(0,1)$ be such that $a'=\lambda
a+(1-\lambda)x$ and $b'=\mu b+(1-\mu)y$. By the triangle
inequality, it follows easily that
$$\textstyle \frac{R-\|y\|}{\|b\|-\|y\|}\leq\mu
.$$ Moreover, since

$$\textstyle R=\|\lambda
a+(1-\lambda)x\|\geq\lambda\|a\|-(1-\lambda)\|x\|,$$ we have
$$\textstyle \lambda\leq \frac{R+\|x\|}{\|a\|+\|x\|}.$$

Without loss of generality, we can assume that $\lambda>\mu$. If
we denote $$d=\max\{\|x\|,\|y\|,\|a-b\|\},$$ we have
\begin{eqnarray*}
\textstyle \|b'-a'\| &\leq& \textstyle \lambda
                   \|a-b\|+(1-\lambda)\|x-y\|+|\lambda-\mu|(\|y\|+\|b\|)\\
          &\leq& \textstyle
3d+\bigl(\frac{R+\|x\|}{\|a\|+\|x\|}-\frac{R-\|y\|}{\|b\|-\|y\|}\bigr)(\|y\|+\|b\|)\\
    &\leq& \textstyle
3d+\frac{R(\|b\|-\|a\|-\|y\|-\|x\|)+\|a\|\,\|y\|+\|b\|\,\|x\|}{(\|a\|+\|x\|)(\|b\|-\|y\|)}(\|y\|+\|b\|)\\
    &\leq& \textstyle
3d+\frac{R(\|b\|-\|a\|-\|y\|-\|x\|)+\|a\|\,\|y\|+\|b\|\,\|x\|}{(\|a\|+\|x\|)(\|b\|/2)}(2\|b\|)\\
    &\leq& \textstyle
3d+4\frac{R(\|b\|-\|a\|-\|y\|-\|x\|)}{2R}+4\frac{\|a\|\,\|y\|+\|b\|\,\|x\|}{\|a\|+\|x\|}\\
    &\leq& \textstyle
    3d+2\bigl(\bigl|\|b\|-\|a\|\bigr|\bigr)+\frac{4\|a\|\,\|y\|}{\|a\|+\|x\|}+\frac{4\|b\|\,\|x\|}{\|a\|+\|x\|}\\
&\leq& \textstyle
    5d+\frac{4\|a\|}{\|a\|+\|x\|}d+\frac{4\|b-a+a\|\,\|x\|}{\|a\|+\|x\|}\\
&\leq& \textstyle
    5d+4d+\frac{4\|x\|}{\|a\|+\|x\|}\|b-a\|+\frac{4\|a\|}{\|a\|+\|x\|}\|x\|\leq \textstyle 17d\\
    \end{eqnarray*}
 The proof is concluded if we set $\Gamma=17$

\end{proof}

%
%

The following theorem is the main result of this section.

\begin{theorem}\label{puntolur} Let $X$ be a normed space, $B$
a nonempty closed convex subsets of $X$, $A$ a body in $X$ and
$a\in\partial A$ an LUR point of $A$. Let $\{A_n\}$ and $\{B_n\}$
be two sequences of closed convex sets such that $A_n\rightarrow
A$ and $B_n\rightarrow B$ for the Attouch-Wets convergence.
Suppose that $\{a_n\}$ and $\{b_n\}$ are sequences in $X$ such
that $a_n\in A_n,\ b_n\in B_n$ ($n\in \N$) and
$$\dist(A_n,B_n)=\|a_n-b_n\|.$$
Suppose that at least one of the following conditions holds.
\begin{enumerate}
\item[(1)] $A\cap B=\{a\}$. \item[(2)] $A\cap B=\emptyset$ and
there exists $b\in B$ such that $\dist(A,B)=\|a-b\|$.
\end{enumerate}
Then $a_n\to a$ in the $\|\cdot\|$-topology.
\end{theorem}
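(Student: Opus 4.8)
The plan is to manufacture a single norm-one functional that strongly exposes $A$ at $a$ and simultaneously controls $B$ from the opposite side, and then to show that the minimal-distance points are forced into the thin slices this functional determines. Set $r=\dist(A,B)$, so $r=0$ in case~(1) and $r=\|a-b\|>0$ in case~(2). In case~(1) the open convex set $\inte A$ is disjoint from $B$ (since $a\in\partial A$ and $A\cap B=\{a\}$), while in case~(2) the open convex set $A+\inte(rB_X)$ is disjoint from $B$ (because $\dist(y,A)\ge r$ for every $y\in B$). A Hahn--Banach separation then produces $f\in S_{X^*}$ with $f(a)=\sup f(A)$ and $B\subseteq\{x:f(x)\ge f(a)+r\}$; in particular $f$ supports $A$ at $a$, so Lemma~\ref{slicelimitatoselur} guarantees that $f$ strongly exposes $A$ at $a$ and that every slice $S(f,\alpha,A)$ is bounded. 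I would immediately record two consequences: first $\m(A,B)=\{a\}$ (any $x\in\m(A,B)$ has $f(x)\ge f(a)$, hence lies in the singleton exposed face); and second the estimate
\[ \dist(x,B)\ \ge\ f(a)+r-f(x)\qquad (x\in A), \]
obtained by bounding below the distance from $x$ to the half-space containing $B$. Together with strong exposedness this yields the key \emph{well-posedness} statement: if $\{u_n\}\subset A$ satisfies $\dist(u_n,B)\to r$, then $f(u_n)\to f(a)$, whence $u_n\to a$ in norm.

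Next I would reduce the theorem to a boundedness assertion. Suppose for the moment that $\{a_n\}$ and $\{b_n\}$ are bounded. Then Attouch--Wets convergence gives $\dist(a_n,A)\to0$ and $\dist(b_n,B)\to0$, while Fact~\ref{distanzanulla} gives $\|a_n-b_n\|=\dist(A_n,B_n)\le r+o(1)$. Picking $\tilde a_n\in A$ with $\|\tilde a_n-a_n\|\to0$, we obtain $\dist(\tilde a_n,B)\le\|a_n-b_n\|+\dist(b_n,B)+o(1)\to r$, so $\{\tilde a_n\}$ is a minimizing sequence for $\dist(\cdot,B)$ on $A$; the well-posedness statement then forces $\tilde a_n\to a$, hence $a_n\to a$. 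Thus everything comes down to proving that $\{a_n\}$ (equivalently $\{b_n\}$) is bounded.

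Boundedness is the main obstacle, and this is exactly where Lemma~\ref{convnorma} enters, as the infinite-dimensional substitute for the extraction of a recession direction used in the finite-dimensional Theorem~\ref{thm:stability principale}. I would first settle case~(1). Translate so that $a=0$, and suppose $\|a_n\|\to\infty$ along a subsequence; since $\|a_n-b_n\|=\dist(A_n,B_n)\to0$, also $\|b_n\|\to\infty$. Choose anchors $\alpha_n\in A_n$ and $\beta_n\in B_n$ with $\alpha_n\to a=0$ and $\beta_n\to a=0$ (both exist by the lower convergence, as $a\in A\cap B$), fix a large radius $R\ge1$, and let $a_n'$ and $b_n'$ be the points where the segments $[\alpha_n,a_n]\subseteq A_n$ and $[\beta_n,b_n]\subseteq B_n$ cross $RS_X$. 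For large $n$ the hypotheses of Lemma~\ref{convnorma} hold, so $\|a_n'-b_n'\|\le\Gamma\max\{\|\alpha_n\|,\|\beta_n\|,\|a_n-b_n\|\}\to0$. Since $\|a_n'\|=\|b_n'\|=R$, Attouch--Wets convergence gives $\dist(a_n',A)\to0$ and $\dist(b_n',B)\to0$, so choosing $\tilde a_n\in A$ near $a_n'$ produces a sequence in $A$ with $\|\tilde a_n\|\to R$ and $\dist(\tilde a_n,B)\to0$ — a minimizing sequence that cannot converge to $a=0$, contradicting well-posedness. This proves case~(1).

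Finally I would reduce case~(2) to case~(1) rather than fight the constant $\Gamma$ and the gap $r$. Replace $B$ by $B'=B+rB_X$. Then $A\cap B'=\m(A,B)=\{a\}$ (every $x\in A$ has $\dist(x,B)\ge r$, so $x\in B'$ forces $x\in\m(A,B)$), the sets $B_n+rB_X$ still Attouch--Wets converge to $B'$, and the very same $a_n$ remains an exact minimal-distance point for the pair $(A_n,B_n+rB_X)$, since $\dist(a_n,B_n+rB_X)=\max\{0,\|a_n-b_n\|-r\}=\dist(A_n,B_n+rB_X)$, the corresponding partner lying on the segment $[a_n,b_n]$. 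Applying the already-proved case~(1) to $(A,B')$ then gives $a_n\to a$. I expect the delicate point throughout to be the boundedness step: the whole force of the LUR hypothesis is spent in Lemma~\ref{slicelimitatoselur} to make the slices bounded and the minimization well-posed, and Lemma~\ref{convnorma} is precisely what converts an escaping minimal pair into a bounded, norm-$R$ minimizing sequence that well-posedness forbids.
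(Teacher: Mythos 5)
Your proof is correct, and while it rests on the same key lemmas as the paper's argument --- Lemma~\ref{slicelimitatoselur} and the chord-crossing estimate of Lemma~\ref{convnorma} --- its organization is genuinely different. The paper runs two parallel direct arguments: in case (1) it couples the radius $R=r+\alpha$ to a slice $S(f,3\alpha,A)\subset rB_X$ and derives the contradiction in the boundedness claim from the functional estimates $f(a'_{n_k})\le-2\alpha$ versus $f(b'_{n_k})\ge-\alpha$; in case (2) it must redo all of this with the more delicate slice $S(f,(\Gamma+2)d,A)$, the constant $\Gamma$ of Lemma~\ref{convnorma} entering the choice of the slice itself. You instead isolate a single well-posedness statement (any sequence in $A$ minimizing $\dist(\cdot,B)$ norm-converges to $a$), use it twice --- once to conclude from boundedness, once to contradict escape to infinity via the auxiliary sequence $\tilde a_n$ with $\|\tilde a_n\|\to R$ and $\dist(\tilde a_n,B)\to 0$ --- and then dispose of case (2) altogether by reducing it to case (1) through the enlargement $B\mapsto B+rB_X$. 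This buys a radius $R$ independent of the slice geometry, eliminates the $\Gamma$-bookkeeping of the paper's case (2), and in fact uses only the strong-exposedness half of Lemma~\ref{slicelimitatoselur}: the bounded-slice half, which the paper needs to choose $R$, never enters your argument. (Interestingly, the paper itself separates $\inte A$ from $B+dB_X$ in case (2), but does not push that enlargement through the rest of the proof as you do.)

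Two small repairs are needed, neither a real gap. First, Lemma~\ref{convnorma} requires $R>1$, so take $R=2$, say, rather than $R\ge1$. Second, in a general normed space $B+rB_X$ need not be closed (if $f\in S_{X^*}$ does not attain its norm and $B=\{x:\,f(x)\ge 1\}$, then $B+B_X=\{x:\,f(x)>0\}$), so the pair $(A,B+rB_X)$ does not literally satisfy the hypotheses of case (1). Replace it by $B'=\{x:\,\dist(x,B)\le r\}=\overline{B+rB_X}$ and $B_n'=\overline{B_n+rB_X}$: since distances to a set and to its closure coincide, your identities $A\cap B'=\m(A,B)=\{a\}$ and $\dist(a_n,B_n')=\max\{0,\|a_n-b_n\|-r\}=\dist(A_n,B_n')$, the existence of the partner point on $[a_n,b_n]$, and the Attouch--Wets convergence $B_n'\to B'$ (which follows from $h_N(B_n',B')\le h_M(B_n,B)$ for any integer $M\ge N+r+1$) all go through verbatim. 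With these two adjustments your argument is complete.
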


\begin{proof} There is no loss of generality in assuming $a=0$.
Let us assume that (1) holds.

Since $\inte(A)\cap B=\emptyset$, by the Hahn-Banach theorem there
exists $f\in\SX$ such that $$\sup f(A)=0=\inf f(B).$$ In
particular, $f$ is a support functional for $A$ in $0$. Let
$\alpha>0$ and observe that, by Lemma~\ref{slicelimitatoselur},
there exists $r>1$ such that $S=S(f,3\alpha,A)\subset rB_X$. Put
$R=r+\alpha$.

We claim that $\{a_n\}$ and $\{b_n\}$ are eventually contained in
$2R B_X$. Suppose that this is not the case and let $\{a_{n_k}\}$
and $\{b_{n_k}\}$ be two subsequences such that $\|a_{n_k}\|>2R$
and $\|b_{n_k}\|>2R$ whenever $k\in\N$. Now, let $x_{n_k}\in
A_{n_k}$ and $y_{n_k}\in B_{n_k}$ be such that $\|x_{n_k}\|\to 0$
and $\|y_{n_k}\|\to 0$ as $k\to\infty$. Let $[x_{n_k},a_{n_k}]\cap
RS_X=\{a'_{n_k}\}$ and $[y_{n_k},b_{n_k}]\cap RS_X=\{b'_{n_k}\}$,
and observe that, by Lemma~\ref{convnorma}, it holds
$\|b'_{n_k}-a'_{n_k}\|\to0$ as $n\to\infty$.

Since $A_n\rightarrow A$ for the Attouch-Wets convergence,
$a'_{n_k}\in A_{n_k}\cap RS_X$ and
$$A= S(f,3\alpha,A)\cup[A\cap\{x\in X;\,f(x)\leq -3\alpha\}]\subset rB_X\cup\{x\in X;\,f(x)\leq -3\alpha\},$$
it eventually holds $a'_{n_k}\in\{x\in X;\,f(x)\leq -2\alpha\}$.

Analogously, since $B_n\rightarrow B$ for the Attouch-Wets
convergence, $b'_{n_k}\in B_{n_k}\cap RS_X$ and
$$B\subset \{x\in X;\,f(x)\geq 0\},$$
it eventually holds $b'_{n_k}\in\{x\in X;\,f(x)\geq -\alpha\}$.

In particular, it eventually holds $\|b'_{n_k}-a'_{n_k}\|\geq
f(b'_{n_k}-a'_{n_k})\geq \alpha$, a contradiction. Therefore our
claim is proved.

Now, since $\{a_n\}$ and $\{b_n\}$ are bounded, there exist
sequences $\{w_n\}\subset A$ and $\{z_n\}\subset B$ such that
$\|w_n-a_n\|\to0$ and $\|z_n-b_n\|\to0$. Since clearly
$\lim_n\|z_n-w_n\|=0$, it holds
$$0\leq\liminf_n[f(z_n)-\|w_n-z_n\|]\leq\liminf_n[f(w_n)]\leq\limsup_n f(w_n)\leq0,$$
and hence that $f(w_n)\to0$ as $n\to\infty$. Since, by
Lemma~\ref{slicelimitatoselur}, $f$ strongly exposes $0$, we have
that $w_n\to0$ and hence that $a_n\to0$ in the
$\|\cdot\|$-topology. This concludes the proof in case (1).
\medskip

If assumption (2) holds, the proof is similar, but some additional
efforts are needed. Let $d=\dist(A,B)$ and observe that:
\begin{enumerate}
\item $\inte(A)\cap(B+dB_X)=\emptyset$; \item $0\in B+dB_X$; \item
$\limsup_n \|a_n-b_n\|\leq d$
\end{enumerate}
 Then there exists $f\in\SX$ such
that $$\sup f(A)=0=\inf f(B+dB_X).$$ In particular, $f$ is a
support functional for $A$ in $0$ and $\inf f(B)=d$. Let $\Gamma$
be the constant given by Lemma~\ref{convnorma} and let us consider
$S=S(f,(\Gamma+2)d,A)$ and observe that, by
Lemma~\ref{slicelimitatoselur}, there exists $r>1$ such that
$S\subset rB_X$. Let $R=r+d$.

We claim that $\{a_n\}$ and $\{b_n\}$ are eventually contained in
$2R B_X$. Suppose that this is not the case and let $\{a_{n_k}\}$
and $\{b_{n_k}\}$ be two subsequences such that $\|a_{n_k}\|>2R$
and $\|b_{n_k}\|>2R$ whenever $k\in\N$. Now, let $x_{n_k}\in
A_{n_k}$ $y_{n_k}\in B_{n_k}$ be such that $x_{n_k}\to a$ and
$y_{n_k}\to b$ as $k\to\infty$. Let $[x_{n_k},a_{n_k}]\cap
RS_X=\{a'_{n_k}\}$ and $[y_{n_k},b_{n_k}]\cap RS_X=\{b'_{n_k}\}$,
and observe that, by Lemma~\ref{convnorma}, it eventually holds
$\|b'_{n_k}-a'_{n_k}\|< (\Gamma+1)d$.

Since $A_n\rightarrow A$ for the Attouch-Wets convergence,
$a'_{n_k}\in A_n\cap RS_X$ and

\begin{eqnarray*}
A&=& S(f,(\Gamma+2)d,A)\cup[A\cap\{x\in X;\,f(x)\leq -(\Gamma+2)d\}]\\
&\subset& rB_X\cup\{x\in X;\,f(x)\leq -(\Gamma+2)d\},
\end{eqnarray*}

it eventually holds $a'_{n_k}\in\{x\in X;\,f(x)\leq
-(\Gamma+1)d\}$.

Analogously, since $B_n\rightarrow B$ for the Attouch-Wets
convergence, $b'_{n_k}\in B_n\cap RS_X$ and
$$B\subset \{x\in X;\,f(x)\geq d\},$$
it eventually holds $b'_{n_k}\in\{x\in X;\,f(x)\geq 0\}$.

In particular, it eventually holds $\|b'_{n_k}-a'_{n_k}\|\geq
f(b'_{n_k}-a'_{n_k})\geq (\Gamma+1)d$, a contradiction and our
claim is proved.

Now, since $\{a_n\}$ and $\{b_n\}$ are bounded, there exist
sequences $\{w_n\}\subset A$ and $\{z_n\}\subset B$ such that
$\|w_n-a_n\|\to0$ and $\|z_n-b_n\|\to0$. Let us observe that
$$d\leq\liminf\|z_n-w_n\|\leq\limsup_n\|z_n-w_n\|=\limsup_n\|a_n-b_n\|\leq d$$ and
$$0\leq\liminf_n[f(z_n)-\|w_n-z_n\|]\leq\liminf_n[f(w_n)]\leq\limsup_n f(w_n)\leq0.$$
Hence, we obtain $f(w_n)\to0$ as $n\to\infty$. Since, by
Lemma~\ref{slicelimitatoselur}, $f$ strongly exposes $0$, we have
that $w_n\to0$ and hence that $a_n\to0$ in the
$\|\cdot\|$-topology. \end{proof}

\begin{remark} As in the finite-dimensional case (see Remark~\ref{remark:stabilityfunctions}), the theorem above can be proved in an alternative way,
by using known results concerning stability theory for convex
optimization problem. However, the well-posedness of the involved
problems requires a proof with techniques similar to those used in
Theorem~\ref{puntolur}. As in the finite-dimensional case, we
preferred to present a direct and more geometrical proof.
\end{remark}

If the limit sets $A$ and $B$ satisfy a strong condition about
non-separation, we obtain a result similar to
Corollary~\ref{thm:stabilitycorollario}.

\begin{proposition}\label{internononvuoto} Let $A$ and $B$
two closed convex subsets of a reflexive Banach space $X$ such
that $A\cap B$ is bounded and such that $(\inte A)\cap
B\neq\emptyset$. Let $\{A_n\}$ and $\{B_n\}$ be two sequences of
closed convex sets such that $A_n\rightarrow A$ and
$B_n\rightarrow B$ for the Attouch-Wets convergence. Suppose that
$\{a_n\}$ and $\{b_n\}$ are sequences in $X$ such that $a_n\in
A_n,\ b_n\in B_n$ ($n\in \N$) and
$$\dist(A_n,B_n)=\|a_n-b_n\|.$$
Then there exist two subsequences $\{a_{n_k}\}$ and $\{b_{n_k}\}$
that weakly converge to a point of $A\cap B$.
\end{proposition}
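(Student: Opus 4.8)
The plan is to reduce to the case of a common interior point, then use the Attouch--Wets convergence to show that the intersections $A_n\cap B_n$ are eventually nonempty and \emph{uniformly} bounded, and finally extract a weakly convergent subsequence by reflexivity. First I would translate so that $0\in(\inte A)\cap B$ and fix $\rho>0$ with $\rho B_X\subset A$, and fix $M>0$ with $A\cap B\subset MB_X$ (possible since $A\cap B$ is bounded). Since $A\cap B\ni 0$, Fact~\ref{distanzanulla} gives $\|a_n-b_n\|=\dist(A_n,B_n)\to0$. Next I claim $\tfrac{\rho}{2}B_X\subset A_n$ for all large $n$: if some $y_n\in\tfrac{\rho}{2}B_X\setminus A_n$, separate it from the closed convex set $A_n$ by $g_n\in\SX$ with $\sup g_n(A_n)<g_n(y_n)\le\tfrac{\rho}{2}$; on the other hand, writing $\epsilon_n=e(A\cap\rho B_X,A_n)\to0$ (upper Attouch--Wets) and using $\rho B_X\subset A$, we get $\rho=\sup g_n(\rho B_X)\le\sup g_n(A_n)+\epsilon_n$, a contradiction. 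Choosing $\tilde0_n\in B_n$ with $\tilde0_n\to0$ (lower Attouch--Wets), we have $\tilde0_n\in\tfrac{\rho}{2}B_X\subset A_n$ eventually, so $A_n\cap B_n\neq\emptyset$; hence $\dist(A_n,B_n)=0$ and $a_n=b_n\in A_n\cap B_n$ for large $n$.

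The heart of the argument is to show that $A_n\cap B_n$ is eventually contained in a fixed ball. The tool is a bounded linear regularity estimate for the pair $(A,B)$, which I would prove inline from the interior point: given $z$ with $\|z\|\le K$ and with $\dist(z,A)$ and $\dist(z,B)$ small, pick $z_B\in B$ near $z$ and $z_A\in A$ near $z_B$, and set $\delta=\dist(z,A)+\dist(z,B)$. Since $\rho B_X\subset A$, the ball of radius $\lambda\rho$ about $(1-\lambda)z_A$ lies in $A$ (it is a convex combination of $z_A$ and $\rho B_X$); as the distance from $(1-\lambda)z_B$ to $(1-\lambda)z_A$ is at most $\delta$, taking $\lambda=\delta/\rho\le1$ puts $\zeta:=(1-\lambda)z_B$ in $A$, while $\zeta\in B$ because $0,z_B\in B$. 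This yields $\dist(z,A\cap B)\le L_K\bigl(\dist(z,A)+\dist(z,B)\bigr)$ with $L_K$ depending only on $K$ and $\rho$.

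Now suppose $A_n\cap B_n\not\subset(M+2)B_X$ for infinitely many $n$, and pick $p_n\in A_n\cap B_n$ with $\|p_n\|>M+2$. The segment $[\tilde0_n,p_n]$ lies in the convex set $A_n\cap B_n$ and, since $\|\tilde0_n\|\to0$, crosses the sphere of radius $M+\tfrac32$ at a point $z_n\in A_n\cap B_n$. Applying the upper Attouch--Wets estimates in the ball $(M+2)B_X$ gives $\dist(z_n,A)\to0$ and $\dist(z_n,B)\to0$, so the regularity estimate yields $\dist(z_n,A\cap B)\to0$; thus there are $\zeta_n\in A\cap B$ with $\|\zeta_n\|\to M+\tfrac32>M$, contradicting $A\cap B\subset MB_X$. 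Hence $A_n\cap B_n\subset(M+2)B_X$ eventually, and in particular $\{a_n\}=\{b_n\}$ is bounded. By reflexivity a subsequence satisfies $a_{n_k}\rightharpoonup c$; using boundedness and upper Attouch--Wets I would take $w_k\in A$ and $z_k\in B$ with $\|w_k-a_{n_k}\|\to0$ and $\|z_k-a_{n_k}\|\to0$, so $w_k\rightharpoonup c$ and $z_k\rightharpoonup c$, and since $A,B$ are closed convex (hence weakly closed) we get $c\in A\cap B$; as $\|a_n-b_n\|\to0$, also $b_{n_k}\rightharpoonup c$.

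I expect the main obstacle to be exactly this uniform boundedness of $A_n\cap B_n$. The recession-cone method that suffices in Theorem~\ref{thm:stability principale} breaks down here, because in $\ell_2$ a closed convex set can be unbounded while having trivial asymptotic cone; consequently the interior-point hypothesis $(\inte A)\cap B\neq\emptyset$ must be exploited \emph{quantitatively}, via the regularity estimate above, rather than merely through asymptotic cones. The verification that the crossing point $z_n$ is genuinely close to both limit sets (so that the regularity estimate applies) is where the Attouch--Wets localization must be handled with care.
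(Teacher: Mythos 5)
Your proof is correct, but it takes a genuinely different, self-contained route. The paper's own argument is only a few lines: it invokes \cite[Corollary~9.2.8]{LUCC}, which says that under the condition $(\inte A)\cap B\neq\emptyset$ intersections are stable, i.e.\ $A_n\cap B_n\to A\cap B$ for the Attouch--Wets convergence; then eventual nonemptiness of $A_n\cap B_n$ forces $a_n=b_n$, Attouch--Wets convergence to the bounded set $A\cap B$ forces $\{a_n\}$ to be bounded, and reflexivity plus weak closedness of $A\cap B$ finishes. What you do instead is prove the needed special case of that cited stability theorem from scratch: eventual nonemptiness via the separation argument giving $\tfrac{\rho}{2}B_X\subset A_n$ for large $n$, and eventual uniform boundedness of $A_n\cap B_n$ via a bounded linear regularity estimate $\dist(z,A\cap B)\le L_K\bigl(\dist(z,A)+\dist(z,B)\bigr)$, derived quantitatively from the interior point, combined with the sphere-crossing argument that exploits convexity of $A_n\cap B_n$. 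The trade-off is clear: the paper's proof is shorter and leans on standard set-convergence machinery, while yours is elementary and makes explicit exactly where each hypothesis enters; in particular, your crossing argument is also what is silently needed to pass from ``$A_n\cap B_n\to A\cap B$ Attouch--Wets with $A\cap B$ bounded'' to ``$\{a_n\}$ is bounded,'' a step the paper compresses into the single sentence ``Since $A\cap B$ is bounded, the thesis holds.'' Your closing observation about the failure of the recession-cone method in infinite dimensions is apt and is exactly what the paper's Example~\ref{esempiocono-semispazio} illustrates, namely why the technique of Theorem~\ref{thm:stability principale} cannot be reused here.
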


\begin{proof} Let us observe that, since $(\inte A)\cap
B\neq\emptyset$, the sets $A_n\cap B_n$ ($n\in\N$) are eventually
nonempty and hence $a_n$ and $b_n$ eventually coincide. Since $X$
is reflexive, it suffices to prove that $\{a_n\}$ and $\{b_n\}$
are bounded. By \cite[Corollary~9.2.8]{LUCC}, the sequence
$\{A_n\cap B_n\}$ converges to $A\cap B$ for the Attouch-Wets
convergence. Since $A\cap B $ is bounded, the thesis holds.
\end{proof}

By combining the above proposition with Theorem~\ref{puntolur}, we
obtain the following corollary.

\begin{corollary}\label{LUR}
Let $X$ be a reflexive Banach space $X$. Let $A$ be an LUR body of
$X$ and $B$ a closed convex subset of $X$ such that $A\cap B$ is
nonempty and bounded. Let $\{A_n\}$ and $\{B_n\}$ be two sequences
of closed convex sets such that $A_n\rightarrow A$ and
$B_n\rightarrow B$ for the Attouch-Wets convergence. Suppose that
$\{a_n\}$ and $\{b_n\}$ are sequences in $X$ such that $a_n\in
A_n,\ b_n\in B_n$ ($n\in \N$) and
$$\dist(A_n,B_n)=\|a_n-b_n\|.$$
Then there exist subsequences $\{a_{n_k}\}$ and $\{b_{n_k}\}$ that
weakly converge to a point $c\in A\cap B$. Moreover, if $(\inte
A)\cap B=\emptyset$ then $a_n, b_n\to c$ with respect to the norm
convergence.

\end{corollary}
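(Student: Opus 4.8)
The plan is to dispose of the statement by splitting on whether $(\inte A)\cap B$ is empty, using Proposition~\ref{internononvuoto} in the non-empty case and Theorem~\ref{puntolur} in the empty case.

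If $(\inte A)\cap B\neq\emptyset$, I would simply invoke Proposition~\ref{internononvuoto}: $X$ is reflexive and $A\cap B$ is nonempty and bounded, so its hypotheses hold verbatim, and it produces subsequences $\{a_{n_k}\}$ and $\{b_{n_k}\}$ weakly converging to a common point $c\in A\cap B$. This settles the first assertion, and since the hypothesis of the ``moreover'' clause fails in this case, nothing further is needed.

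The real work is the case $(\inte A)\cap B=\emptyset$, and the step I expect to be the crux is showing that $A\cap B$ collapses to a single point. I would argue as follows: $\inte A$ and $B$ are disjoint convex sets, so the Hahn--Banach theorem yields $f\in\SX$ with $\sup f(A)\le\inf f(B)$; since $A\cap B\neq\emptyset$ these two numbers coincide, and every $c\in A\cap B$ satisfies $f(c)=\sup f(A)$, so that $f$ supports $A$ at $c$. As $A$ is an LUR body, $c\in\partial A$ is an LUR point, and Lemma~\ref{slicelimitatoselur} then tells me that $f$ strongly exposes $A$ at $c$; hence the exposed face $\{x\in A;\ f(x)=\sup f(A)\}$ reduces to $\{c\}$ (apply the strong-exposition property to the constant sequence at any point of the face). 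Since $A\cap B$ is contained in this face, I conclude $A\cap B=\{c\}$.

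Once this is in hand, the rest is mechanical. With $A\cap B=\{c\}$ and $c$ an LUR point of the body $A$, condition~(1) of Theorem~\ref{puntolur} is satisfied (taking $c$ in the role of the distinguished point $a$ there), so that theorem gives $a_n\to c$ in the norm topology. Finally, $A\cap B\neq\emptyset$ forces $\dist(A,B)=0$, so Fact~\ref{distanzanulla} yields $\|a_n-b_n\|=\dist(A_n,B_n)\to0$ and therefore $b_n\to c$ in norm as well; norm convergence implies weak convergence, so both the first assertion and the ``moreover'' clause follow. The only delicate ingredient is the reduction of the intersection to a singleton, which hinges entirely on the strong exposedness furnished by the LUR hypothesis via Lemma~\ref{slicelimitatoselur}; with that established, the two previously proved results combine without further effort.
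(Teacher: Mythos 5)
Your proof is correct and follows exactly the route the paper intends: the paper states the corollary with only the remark ``by combining the above proposition with Theorem~\ref{puntolur}'', i.e.\ Proposition~\ref{internononvuoto} when $(\inte A)\cap B\neq\emptyset$ and Theorem~\ref{puntolur}(1) when $(\inte A)\cap B=\emptyset$. Your reduction of $A\cap B$ to a singleton via separation and the strong exposedness from Lemma~\ref{slicelimitatoselur} is precisely the detail needed to make that combination legitimate, and your handling of $b_n$ via Fact~\ref{distanzanulla} is sound.
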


\section{Examples and final remarks}\label{sectionesempi}

In this section we provide two examples to illustrate the role of
the assumptions in the infinite-dimensional case. We point out
that both of them are in $\ell_2$. Therefore, the assumptions used
in Section~\ref{Infinite-dimensional} cannot be avoided even in
the ``simplest'' infinite-dimensional space.

The following example shows that an analogous of
Theorem~\ref{thm:stability principale} does not hold in the
infinite-dimensional setting.

\begin{example}\label{esempiocono-semispazio} Let $X=\ell_2$ and $\{e_n\}_n$ its standard basis. Let $A,B,A_n,B_n\subset X$ ($n\in\N$, $n\geq2$)
be defined as follows.
\begin{eqnarray*}
A   &=& \textstyle \ccone(\{ e_k+\frac{1}{k} e_1;\, k\in\N\});\\
B   &=& \{x\in X;\, e_1^*(x)=   0\};\\
A_n   &=& \textstyle\cconv\bigl(\{\ln n\, e_n+\frac1n e_1\}\cup(\frac1n e_1+A)\bigr);\\
B_n   &=& B.
\end{eqnarray*}
Let $a_n=\ln n\, e_n+\frac1n e_1\in A_n$ and $b_n=\ln n\, e_n\in
B_n$. Then:
 \begin{enumerate}
 \item $A\cap B=\{0\}$; \item $A_n\to A$ and $B_n\to B$ for
the Hausdorff convergence (and, hence, for the Attouch-Wets
convergence); \item $\dist(A_n,B_n)=\|a_n-b_n\|$; \item
$\|a_n\|,\|b_n\|\to\infty$.
\end{enumerate}
\end{example}

\begin{proof} We just have to prove (i) and (ii), since the
proofs of (iii) and (iv) are straightforward.

 (i) For $n\in\N\setminus\{1\}$, let
$f_n=n e_1^*-e_n^*$ and $g_n=e_n^*$ and observe that $$\textstyle
\{ e_k+\frac{1}{k} e_1;\, k\in\N\}\subset\{x\in X;\, f_n(x)\geq
0,\, g_n(x)\geq 0\}.$$ Then $A\subset \bigcap_{n=1}^\infty\{x\in
X;\, f_n(x)\geq 0,\, g_n(x)\geq 0\}$. Now, if $x\in A\cap B$, it
holds $e_1^*(x)=0$, $f_n(x)=-e^*_n(x)\geq0$ and
$g_n(x)=e^*_n(x)\geq0$. Then $x=0$.

 (ii) We just have to prove that $A_n\to A$ for the Hausdorff
convergence. Let us observe that $$\textstyle \dist(a_n,\frac1n
e_1+A)\leq\|\frac1n e_1+\ln n(e_n+\frac1n e_1)-a_n\|=\frac{\ln
n}n.$$ Hence, it holds
$$\textstyle d_H(A_n,A)\leq d_H(A_n,\frac1n e_1+A)+d_H(\frac1n e_1+A,A)\leq \frac{\ln n}{n}+\frac1n,$$
and the proof is concluded. \end{proof}

\medskip

Given two sets $A,B\subset X$, we say that $A$ and $B$ are {\em
separated} if{f} there exists $x^*\in X^*\setminus\{0\}$ such that
$$\textstyle \sup x^*(A)\leq\inf x^*(B).$$
The following example shows that, in
Proposition~\ref{internononvuoto}, the condition $$(\inte A)\cap
B\neq\emptyset$$ cannot be replaced by the weaker condition ``$A$
and $B$ are not separated''.

\begin{example}\label{esempiocono-cono} Let us consider $X=\ell_2$ and for $n\in\N$ let us consider the following subsets of $X$.
\begin{eqnarray*}
C_n   &=& \textstyle \ccone\bigl(\{e_{2n-1},e_{2n}+\frac1n e_{2n-1}\}\bigr)-\frac1n e_{2n-1};\\
D_n   &=& \textstyle \ccone\bigl(\{-e_{2n-1},e_{2n}-\frac1n e_{2n-1}\}\bigr)+\frac1n e_{2n-1};\\
C'_n   &=& \textstyle C_n-\frac1{\ln n}{e_{2n-1}};\\
D'_n   &=& \textstyle D_n-\frac1{\ln n}{e_{2n-1}};\\
A   &=& \textstyle \cconv\bigl(\bigcup_{n\in\N}C_n\bigr);\\
B   &=& \textstyle \cconv\bigl(\bigcup_{n\in\N}D_n\bigr);\\
A_n   &=& \textstyle \cconv\bigl(\bigcup_{k\in\N\setminus\{n\}}C_k\cup C'_n\bigr);\\
B_n   &=& \textstyle
\cconv\bigl(\bigcup_{k\in\N\setminus\{n\}}D_k\cup D'_n\bigr).
\end{eqnarray*}
 Then:
 \begin{enumerate}
\item $A$ and $B$ are not separated; \item $A\cap B$ is bounded;
 \item $A_n\to A$ and $B_n\to B$ for
the Hausdorff convergence (and, hence, for the Attouch-Wets
convergence); \item let $a_n=b_n=ne_{2n}\in A_n\cap B_n$ then
$\dist(A_n,B_n)=\|a_n-b_n\|=0$ and $\|a_n\|=\|b_n\|=n\to\infty$.
\end{enumerate}
\end{example}

Let us define $X_n=\span\{e_{2n-1},e_{2n}\}$, $B_{X_n}=B_X\cap
X_n$ and $Y_N=\span(\bigcup_{n=1}^N X_n)$. Observe that
$$Y_N=X_1\oplus_2\ldots\oplus_2 X_N,$$ where we denote by $X_1\oplus_2\ldots\oplus_2
X_N$ the direct sum $X_1\oplus\ldots\oplus X_N$ endowed with the
norm
$$\textstyle \|(x_1,\ldots,x_N)\|=(\|x_1\|^2+\ldots+\|x_N\|^2)^{\frac12}.$$

To exploit the features of Example~\ref{esempiocono-cono} we need
some preliminary lemmas. The easy proof of the following lemma is
left to the reader.

\begin{lemma}\label{intersezione} Let $C_n, D_n\subset X_n$ be
defined as above, then the following inclusion holds.
$$\textstyle (C_n+\frac1{\sqrt{n^2+1}} B_{X_n})\cap(D_n+\frac1{\sqrt{n^2+1}} B_{X_n})\subset 2B_{X_n}$$
\end{lemma}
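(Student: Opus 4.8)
The plan is to work inside the two-dimensional space $X_n=\span\{e_{2n-1},e_{2n}\}$, writing a generic point as $(s,t):=s\,e_{2n-1}+t\,e_{2n}$, so that $\|(s,t)\|=\sqrt{s^2+t^2}$ and $B_{X_n}$ is the Euclidean unit disc. Put $\rho=1/\sqrt{n^2+1}$, the radius of the two enlarging balls, and note $\rho\sqrt{n^2+1}=1$. The whole argument reduces to bounding three linear functionals on a point of the intersection. First I would record two supporting inequalities for each wedge, obtained by testing the right functionals on the generators of the defining cones. The cone of $C_n$ is generated by $e_{2n-1}=(1,0)$ and $e_{2n}+\frac1n e_{2n-1}=(\frac1n,1)$; both functionals $(s,t)\mapsto t$ and $(s,t)\mapsto ns-t$ are nonnegative on these two generators, hence on the whole cone, and after the translation by $-\frac1n e_{2n-1}$ they yield
\[
t\ge 0\quad\text{and}\quad ns-t\ge -1\qquad\text{on }C_n .
\]
By the symmetric computation for the cone of $D_n$ (generated by $(-1,0)$ and $(-\frac1n,1)$), testing $(s,t)\mapsto t$ and $(s,t)\mapsto -ns-t$, one gets $t\ge 0$ and $ns+t\le 1$ on $D_n$. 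Here the functionals $(s,t)\mapsto(ns\mp t)/\sqrt{n^2+1}$ have dual norm $1$.

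Next I would propagate these bounds to the $\rho$-enlargements. Let $p=(s_p,t_p)$ lie in the intersection and write $p=c+u=d+v$ with $c\in C_n$, $d\in D_n$ and $\|u\|,\|v\|\le\rho$. Since each norm-one functional above moves by at most $\rho$ under the perturbations $u,v$, and since the threshold $-1$ is thereby weakened to $-1-\rho\sqrt{n^2+1}=-2$, the three relevant inequalities become
\[
t_p\ge -\rho,\qquad ns_p-t_p\ge -2,\qquad ns_p+t_p\le 2 .
\]
The last two give $t_p-2\le ns_p\le 2-t_p$, whence $t_p\le 2$ and $n^2 s_p^2\le (2-t_p)^2$.

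Finally I would estimate the norm. From the above,
\[
\|p\|^2=s_p^2+t_p^2\le\frac{(2-t_p)^2}{n^2}+t_p^2=:g(t_p),\qquad t_p\in[-\rho,2].
\]
The function $g$ is convex, so its maximum on $[-\rho,2]$ is attained at an endpoint; one has $g(2)=4$, while $g(-\rho)=\frac{(2+\rho)^2}{n^2}+\rho^2\le\frac94+\frac15<4$ for $n\ge 2$. Hence $g(t_p)\le 4$, that is $\|p\|\le 2$, so $p\in 2B_{X_n}$, which is the claim.

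The one delicate point, and the place where the constant genuinely matters, is the endpoint check $g(-\rho)\le 4$. The relaxation to the three half-planes is \emph{exactly} tight at the top point $(0,2)$, which is why the constant $2$ is sharp and cannot be lowered; and it is precisely at the lower endpoint $t_p=-\rho$ that one must invoke $n\ge 2$, the range in which the lemma is used in Example~\ref{esempiocono-cono}. For $n=1$ the conclusion still holds, but the crude half-plane relaxation is too weak, and there one would instead argue directly that every extreme point of the (bounded, convex) intersection lies in $2B_{X_1}$.
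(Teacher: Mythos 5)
The paper never actually writes this proof out --- it is explicitly ``left to the reader'' --- so there is no argument of the authors to compare against; the only question is whether your proof stands on its own, and for $n\ge 2$ it does. Your identification $C_n=\{(s,t):\ t\ge0,\ ns-t\ge-1\}$ and $D_n=\{(s,t):\ t\ge0,\ ns+t\le1\}$ is correct (nonnegativity of the test functionals on the two generators passes to the closed cone, and the translations by $\mp\frac1n e_{2n-1}$ shift the thresholds to $-1$ and $1$); the functionals $(ns\mp t)/\sqrt{n^2+1}$ and $t$ do have dual norm one in the Euclidean plane $X_n$, so on the $\rho$-enlargements the slanted constraints degrade by exactly $\rho\sqrt{n^2+1}=1$ and the constraint $t\ge0$ by $\rho$; and the endpoint maximization of the convex quadratic $g(t)=(2-t)^2/n^2+t^2$ over $[-\rho,2]$, with $g(2)=4$ and $g(-\rho)\le\frac94+\frac15<4$ when $n\ge2$, is sound. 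Your sharpness remark is also accurate: $(0,2)$ lies at distance exactly $\rho$ from each of $C_n$ and $D_n$, so the constant $2$ cannot be improved.

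The caveat is $n=1$, and here you misread how the lemma is used: it is \emph{not} confined to $n\ge2$. The perturbed sets $A_n,B_n$ of Example~\ref{esempiocono-cono} do require $n\ge2$ (because of $1/\ln n$), but the limit sets $A,B$ are closed convex hulls of $\bigcup_{n\in\N}C_n$ and $\bigcup_{n\in\N}D_n$, and the chain of inclusions proving assertion (ii) invokes Lemma~\ref{intersezione} for every index $n=1,\dots,N$. So the case $n=1$ must be covered, and there your half-plane relaxation provably fails, since $g(-\rho)=(2+\tfrac1{\sqrt2})^2+\tfrac12>4$. Your proposed fix is the right one and takes only a few lines to finish: for $n=1$, $\rho=1/\sqrt2$, the set $(C_1+\rho B_{X_1})\cap(D_1+\rho B_{X_1})$ is compact and convex, and its boundary consists of the segment of $\{s+t=2\}$ from $(0,2)$ to $(\tfrac32,\tfrac12)$, the arc of radius $\rho$ centered at $(1,0)$ from $(\tfrac32,\tfrac12)$ to $(1,-\rho)$, the segment of $\{t=-\rho\}$ from $(1,-\rho)$ to $(-1,-\rho)$, and the mirror images of the first two pieces; the norm on each segment is maximized at an endpoint, and the arcs lie in $(1+\rho)B_{X_1}$, so every point of the set has norm at most $2$, with equality only at $(0,2)$. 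Writing out that verification makes the proof complete for all $n\in\N$.
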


\begin{lemma}\label{bolla-L2-L1} Let $W_n$ be convex subsets of $X_n$ containing the origin ($n=1,\ldots,N$) and
let $\epsilon>0$, then the following inclusion holds:
$$\textstyle \conv(\bigcup_{n=1}^N W_n+\epsilon B_{Y_N})\subset 2\,\conv(\bigcup_{n=1}^N [W_n+\sqrt N\epsilon B_{X_n}]).$$
\end{lemma}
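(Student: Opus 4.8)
The plan is to prove the inclusion pointwise, after a harmless simplification of the left-hand side. Since $\epsilon B_{Y_N}$ is convex and $\conv(S+T)=\conv(S)+\conv(T)$ for arbitrary sets $S,T$, the set $\conv(\bigcup_{n=1}^N W_n+\epsilon B_{Y_N})$ coincides with $\conv(\bigcup_{n=1}^N W_n)+\epsilon B_{Y_N}$, i.e.\ the $\epsilon$-neighbourhood (in the $Y_N$-norm) of $\conv(\bigcup_n W_n)$. So first I would fix an arbitrary point $p=w+y$ of this set, with $w\in\conv(\bigcup_{n=1}^N W_n)$ and $\|y\|\le\epsilon$, and aim to show $p\in 2C$, where I abbreviate $C=\conv(\bigcup_{n=1}^N[W_n+\sqrt N\epsilon B_{X_n}])$.

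The core of the argument is to locate the ``noise'' $y$ inside $C$. Decomposing $y=\sum_{n=1}^N y_n$ with $y_n\in X_n$, the definition of the $\ell_2$-type norm on $Y_N=X_1\oplus_2\cdots\oplus_2 X_N$ gives $\sum_{n=1}^N\|y_n\|^2\le\epsilon^2$. This is where the exponent $\sqrt N$ is forced: by the Cauchy--Schwarz inequality, $\sum_{n=1}^N\|y_n\|\le\sqrt N\,(\sum_n\|y_n\|^2)^{1/2}\le\sqrt N\,\epsilon$, which converts the Euclidean constraint into an $\ell_1$-constraint that is exactly strong enough to express $y$ as a \emph{convex} combination of pieces living in the individual blocks. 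Concretely, for the indices with $y_n\ne0$ set $\mu_n=\|y_n\|/(\sqrt N\epsilon)$; then $\sum_n\mu_n\le1$, and $v_n:=y_n/\mu_n$ has norm $\sqrt N\epsilon$, so $v_n\in\sqrt N\epsilon B_{X_n}\subset W_n+\sqrt N\epsilon B_{X_n}$ (here $0\in W_n$ is used). Dumping the leftover mass $1-\sum_n\mu_n\ge0$ onto the origin, which also lies in $W_1+\sqrt N\epsilon B_{X_1}$, yields $y$ as a genuine convex combination of points of $\bigcup_{n=1}^N[W_n+\sqrt N\epsilon B_{X_n}]$, hence $y\in C$.

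Finally, since $W_n\subset W_n+\sqrt N\epsilon B_{X_n}$ we also have $w\in\conv(\bigcup_n W_n)\subset C$. As $C$ is convex and contains both $w$ and $y$, its midpoint $\tfrac12(w+y)$ lies in $C$, whence $p=w+y\in 2C$, which is the desired inclusion. The only delicate point — and the place where the hypotheses genuinely bite — is the distribution of $y$ among the coordinate blocks so that the coefficients $\mu_n$ sum to at most $1$; this is precisely the role of Cauchy--Schwarz and of the factor $\sqrt N$. By contrast, the factor $2$ is cheap: it merely absorbs the fact that $p$ is a Minkowski sum $w+y$ of two elements of the convex set $C$, rather than a convex combination of them.
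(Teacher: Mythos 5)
Your proposal is correct and takes essentially the same approach as the paper: your Cauchy--Schwarz normalization step is precisely a proof of the inclusion $B_{Y_N}\subset \sqrt N\,\conv\bigl(\bigcup_{n=1}^N B_{X_n}\bigr)$, which the paper asserts without proof, and your midpoint argument for the factor $2$ is the same mechanism the paper uses to absorb the Minkowski sum into $2\,\conv\bigl(\bigcup_{n=1}^N [W_n+\sqrt N\epsilon B_{X_n}]\bigr)$. The only difference is presentational: you argue pointwise and supply the details the paper omits.
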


\begin{proof} Since $Y_N=X_1\oplus_2\ldots\oplus_2 X_N$, it is
not difficult to prove that $$B_{Y_N}\subset \sqrt
N\conv(\bigcup_{n=1}^N B_{X_n}),$$ hence the following inclusions
hold:

\begin{eqnarray*}
\textstyle \conv(\bigcup_{n=1}^N W_n+\epsilon B_{Y_N}) &\subset&
\textstyle \conv(\bigcup_{n=1}^N W_n+\epsilon \sqrt
N\conv(\bigcup_{n=1}^N
B_{X_n}))\\
   &\subset&2\,\conv(\bigcup_{n=1}^N [W_n+\sqrt N\epsilon
   B_{X_n}]).
\end{eqnarray*}

\end{proof}

\begin{lemma}\label{scritturaquasiunica} For $n=1,\ldots,N$, let $W_n$ and $Z_n$ be convex subsets of $X_n$ containing the origin.
Then the following inclusion holds:
$$\textstyle \conv\bigl(\bigcup_{n=1}^N W_n\bigr)\cap\conv\bigl(\bigcup_{n=1}^N Z_n\bigr)\subset 2\conv\bigl(\bigcup_{n=1}^N
W_n\cap Z_n\bigr)$$
\end{lemma}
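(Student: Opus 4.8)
The plan is to exploit the fact that the subspaces $X_1,\ldots,X_N$ are in orthogonal direct sum inside $Y_N$, so that for each $n$ there is a well-defined linear projection $P_n\colon Y_N\to X_n$ with $P_n|_{X_m}=0$ whenever $m\neq n$. First I would fix a point $x$ in the left-hand side and record its two representations. Writing $x\in\conv(\bigcup_{n=1}^N W_n)$ and grouping the terms coming from each $W_n$ (using convexity of $W_n$), one obtains $x=\sum_{n=1}^N\lambda_n w_n$ with $w_n\in W_n$, $\lambda_n\geq 0$ and $\sum_n\lambda_n=1$; here the assumption $0\in W_n$ is used to supply $w_n:=0$ when $\lambda_n=0$. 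Likewise $x=\sum_{n=1}^N\mu_n z_n$ with $z_n\in Z_n$, $\mu_n\geq 0$ and $\sum_n\mu_n=1$. Applying $P_n$ and using $w_m,z_m\in X_m$ yields the crucial identity $p_n:=P_n(x)=\lambda_n w_n=\mu_n z_n$ for every $n$.

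The heart of the argument is a pointwise claim: for each $n$ the point $v_n:=p_n/\max\{\lambda_n,\mu_n\}$ (with the convention $v_n:=0$ when $p_n=0$) lies in $W_n\cap Z_n$. Indeed, both $w_n$ and $z_n$ lie on the ray from the origin through $p_n$, and $v_n$ is the shorter of the two. If, say, $\lambda_n\geq\mu_n$, then $v_n=w_n\in W_n$; since $z_n=p_n/\mu_n$ has $v_n$ on the segment $[0,z_n]$ and $Z_n$ is convex with $0\in Z_n$, also $v_n\in Z_n$. The symmetric case is identical, and the degenerate case $p_n=0$ is trivial. Thus $p_n=M_n v_n$ with $M_n:=\max\{\lambda_n,\mu_n\}$ and $v_n\in W_n\cap Z_n$.

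Finally I would reassemble $x=\sum_n p_n=\sum_n M_n v_n$ and control the total mass. Since $M_n\leq\lambda_n+\mu_n$, we get $S:=\sum_n M_n\leq\sum_n\lambda_n+\sum_n\mu_n=2$. If $S=0$ then $x=0$ and there is nothing to prove; otherwise $x=S\sum_n(M_n/S)v_n$, where the inner sum is a convex combination of points of $\bigcup_n(W_n\cap Z_n)$, hence lies in $\conv(\bigcup_n(W_n\cap Z_n))$. Because this set is convex and contains the origin and $S/2\leq 1$, multiplying the inner convex combination by $S\leq 2$ keeps $x$ inside $2\conv(\bigcup_n(W_n\cap Z_n))$, which is exactly the asserted inclusion. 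The main obstacle is precisely the pointwise claim of the second paragraph — recognizing that the shorter of the two collinear representatives already belongs to both $W_n$ and $Z_n$ — together with the sharp bookkeeping $\sum_n M_n\leq 2$ that produces the constant $2$ in the statement.
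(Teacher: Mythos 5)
Your proposal is correct and follows essentially the same route as the paper's proof: both write $x$ as two blockwise convex combinations, use the direct sum $Y_N=X_1\oplus\ldots\oplus X_N$ to equate the components $\lambda_n w_n=\mu_n z_n$, observe that the representative with the larger coefficient (the shorter vector on the common ray) lies in $W_n\cap Z_n$ by convexity and $0\in W_n\cap Z_n$, and then bound the total mass by $\sum_n(\lambda_n+\mu_n)=2$ to land in $2\conv\bigl(\bigcup_{n=1}^N W_n\cap Z_n\bigr)$. Your write-up merely makes the grouping step and the degenerate cases ($\lambda_n=0$, $p_n=0$, $S=0$) more explicit than the paper does.
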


\begin{proof} Let $x\in \conv\bigl(\bigcup_{n=1}^N
W_n\bigr)\cap\conv\bigl(\bigcup_{n=1}^N Z_n\bigr)$, then there
exist $\alpha_n, \beta_n\in[0,1]$, $w_n\in W_n$ and $z_n\in Z_n$
($n=1,\ldots,N$) such that
$$\textstyle x=\sum_{i=1}^N \alpha_n w_n=\sum_{i=1}^N \beta_n z_n.$$
Since $Y_N=X_1\oplus\ldots\oplus X_N$, it holds $\alpha_n
w_n=\beta_n z_n$ ($n=1,\ldots,N$). Now suppose that
$\alpha_n\geq\beta_n>0$, then $w_n=\frac{\beta_n}{\alpha_n}z_n\in
W_n\cap Z_n$. Analogously, if $0<\alpha_n\leq\beta_n$, then
$z_n=\frac{\alpha_n}{\beta_n}w_n\in W_n\cap Z_n$. Hence
$$x\in(\alpha_1+\beta_1)(W_1\cap Z_1)+\ldots+(\alpha_N+\beta_N)(W_N\cap Z_N)\subset2\conv\bigl[\bigcup_{n=1}^N
(W_n\cap Z_n)\bigr].$$ \end{proof}

\begin{proof}[Proof of Example~\ref{esempiocono-cono}.]

Let us prove assertions (i), (ii) and (iii); the proof of (iv) is
obvious.

 (i) Let us observe that, for each $n\in\N$, the segments
$[-\frac1n e_{2n-1},\frac1n e_{2n-1}]$ and $[0, e_{2n}]$ are
contained in $A\cap B$. Now, suppose that there exists $f\in X^*$
such that $\sup f(A)\leq\inf f(B)$, then $f$ is constant on $A\cap
B$ and, by the above remark, it holds $f(e_n)=0$ whenever
$n\in\N$. Hence $f=0$.

(ii) Let us prove that $A\cap B$ is bounded. For $k\in\N$, let us
denote by $P_k$ the canonical projection on the first $k$
coordinates. Let $x\in A\cap B$ and let $N\in\N$ be such that
$\|x-P_{2N}x\|\leq1$. To conclude the proof it suffices to show
that $\|P_{2N}x\|\leq8$

We claim that $P_{2N} x\in \cconv\bigl(\bigcup_{n=1}^N C_n\bigr)$.
Indeed, since $x\in A$, there exists a sequence $\{y_k\}$,
converging in norm to $x$, such that
$y_k\in\conv\bigl(\bigcup_{n=1}^k C_n\bigr)$. Then the sequence
$\{P_{2N}y_k\}\subset \cconv\bigl(\bigcup_{n=1}^N C_n\bigr)$
converges in norm to $P_{2N}x$ and the claim is proved.

Analogously, it holds $P_{2N} x\in \cconv\bigl(\bigcup_{n=1}^N
D_n\bigr)$ and hence,
\begin{eqnarray*}
P_{2N} x   &\in& \textstyle [\conv\bigl(\bigcup_{n=1}^N
C_n\bigr)+\frac1{\sqrt{N^3+N}}B_{Y_N}]\cap[\conv\bigl(\bigcup_{n=1}^N
D_n\bigr)+\frac1{\sqrt{N^3+N)}}B_{Y_N}]\\
   &{\subset}& \textstyle 2\, \conv\bigl(\bigcup_{n=1}^N
[C_n+\frac1{\sqrt{N^2+1}}B_{X_n}]\bigr)\cap2\,
\conv\bigl(\bigcup_{n=1}^N
[D_n+\frac1{\sqrt{N^2+1}}B_{X_n}]\bigr)\\
   &{\subset}& \textstyle 4\, \conv\bigl(\bigcup_{n=1}^N
[C_n+\frac1{\sqrt{N^2+1}}B_{X_n}]\cap
[D_n+\frac1{\sqrt{N^2+1}}B_{X_n}]\bigr)\\
   &{\subset}& \textstyle 4\, \conv\bigl(\bigcup_{n=1}^N
2B_{X_n}\bigr)\subset 8B_X, \end{eqnarray*}

\noindent where the above inclusions hold by
Lemma~\ref{bolla-L2-L1}, Lemma~\ref{scritturaquasiunica} and
Lemma~\ref{intersezione}, respectively.

(iii) Let us prove that $A_n\to A$ for the Hausdorff convergence,
the proof that $B_n\to B$ for the Hausdorff convergence is
similar. Let us observe that $d_H(C_n,C'_n)=\frac1{\ln n}$, hence
we have:
$$\textstyle d_H(A,A_n)=d_H(\cconv\bigl(\bigcup_{k\in\N\setminus\{n\}}C_k\cup C'_n\bigr),
\cconv\bigl(\bigcup_{k\in\N\setminus\{n\}}C_k\cup
C_n\bigr))\leq\frac1{\ln n},$$ and the proof is concluded.
\end{proof}

\section*{Acknowledgments.}
The research of the first and second authors is partially
supported by GNAMPA-INdAM. The research of the second and third
authors is partially supported by Ministerio de Econom\'{i}a y
Competitividad (Spain), MTM2015-68103-P, Plan Nacional de
Matem\'{a}ticas, (2016-2018).

%
%
%

\end{document}